\def\S{{\mathcal S}}
\theoremstyle{plain}
\newtheorem{theorem}{Theorem}
\newtheorem{corollary}[theorem]{Corollary}
\theoremstyle{definition}
\newtheorem{definition}{Definition}
\newtheorem{remark}[theorem]{Remark}
\numberwithin{equation}{section}
\numberwithin{theorem}{section}
\numberwithin{definition}{section}
\numberwithin{prop}{section}
\numberwithin{example}{section}
\newcommand{\R}{\ensuremath{\mathbb{R}}}
\newcommand{\Z}{\ensuremath{\mathbb{Z}}}
\def\H{{\mathcal H}}
\newcommand{\N}{\ensuremath{\mathbb{N}}}
\newcommand{\T}{\ensuremath{\mathbb{T}}}
\newcommand{\C}{\ensuremath{\mathbb{C}}}
\DeclareMathOperator{\myRe}{Re}
\DeclareMathOperator{\spec}{spec}
\renewcommand{\Re}{\myRe}
\newcommand{\s}{\sigma}
\newcommand{\m}{\mu}
\newcommand{\D}{\Delta}
\begin{document}

%\pagewiselinenumbers

\title[Algebraic and Dynamic Lyapunov Equations on Time Scales]{Algebraic and Dynamic Lyapunov Equations\\ on Time Scales*}
\author[Davis, Gravagne, Marks, Ramos]{John M. Davis$^1$, Ian A. Gravagne$^2$, Robert J. Marks II$^2$, Alice A. Ramos$^3$}
\address{$^1$Department of Mathematics, Baylor University, Waco, TX 76798}
\email{John\_M\_Davis@baylor.edu}
\address{$^2$Department of Electrical and Computer Engineering, Baylor University, Waco, TX 76798}
\email{Ian\_Gravagne@baylor.edu,  Robert\_Marks@baylor.edu}
\address{$^3$Department of Mathematics, Bethel College, Mishawaka, IN 46545}
\email{alice.ramos@bethelcollege.edu}
\keywords{systems theory, Lyapunov equation.}
\subjclass[2000]{93D05, 93D30, 37B25, 39B42, 15A24}

\thanks{*This work was supported by NSF Grant CMMI\#726996. See {\tt http://www.timescales.org/} for other papers from the Baylor Time Scales Research Group.}

\begin{abstract}
We revisit the canonical continuous-time and discrete-time matrix algebraic and matrix differential equations that play a central role in Lyapunov based stability arguments. The goal is to generalize and extend these types of equations and subsequent analysis to dynamical systems on domains other than $\R$ or $\Z$, e.g. nonuniform discrete domains or domains consisting of a mixture of discrete and continuous components. We compare and contrast the standard theory with the theory in this general case.
\end{abstract}

\maketitle
\tableofcontents

\section{Lyapunov Equations and Stability}
%%%%%%%%%%%%%%%%%%%%%%%%%%%%%%%%%%%%%%%%%%%%%%%%%%
%
One of the most widely used tools for investigating the stability of linear
systems is the Second (Direct) Method of Lyapunov, presented in his
dissertation of 1892. The idea of this method is to
investigate stability of a given system by measuring the rate of
change of the energy of the system. The advantage of this
approach is that it allows one to infer the
stability of differential (and difference) equations without
explicit knowledge of solutions.

We begin with a review of Lyapunov's Second (Direct) Method in
the context of linear differential equations on $\R$ and linear
difference equations on $\Z$.  Then, building on the work of DaCunha
\cite{Da2} we proceed to unify and extend this well known
theory for application to dynamic linear systems defined on
arbitrary time scale domains.

Specifically, DaCunha extended Lyapunov's Second (Direct) Method for
application to stability analysis of certain classes of dynamic
systems (e.g., slowly time varying systems) and developed and solved
a time scale algebraic Lyapunov equation.  Here we develop and
solve a {\em dynamic} time scale Lyapunov equation which has
application to the stability analysis of a much broader class of
systems.

%%%%%%%%%%%%%%%%%%%%%%%%%%%%%%%%%%%%%%%%%%%%%%%%%%%%%%%%%%%%
\section{Stability of Continuous-time Systems}\label{R}
%%%%%%%%%%%%%%%%%%%%%%%%%%%%%%%%%%%%%%%%%%%%%%%%%%%%%%%%%%%%
We begin by considering the familiar linear state
equation
    \begin{equation}\label{CLS}
        \dot{x}(t)= A(t) x(t),
    \end{equation}
for $A\in\R^{n\times n}$, and $t\in\R$. We assume that \eqref{CLS}
has equilibrium $x=0$. There are various notions of stability of solutions to \eqref{CLS}, which we outline now. Without loss of generality, we take the origin as the reference equilibrium.

\begin{definition}
An equilibrium $x=0$ of \eqref{CLS} is \textit{Lyapunov stable} or
\textit{stable in the sense of Lyapunov} if, for every
$\varepsilon>0$, there exists a $\delta=\delta(\varepsilon)>0$ such
that if $||x(t_0)||<\delta$, then $||x(t)||<\varepsilon$ for every
$t \geq t_0$.
\end{definition}

%In essence, Lyapunov stability of an equilibrium implies that
%solutions starting ``close enough" to the equilibrium will remain
%``close".  Notice that the above definition requires that this be
%true for any choice of $\varepsilon$. For a given system, if the
%above holds true for every initial condition (and corresponding
%solution), the equilibrium is said to be {\em uniformly} stable in
%the sense of Lyapunov.

\begin{definition}
    An equilibrium $x=0$ of \eqref{CLS} is called \textit{uniformly
    stable} if there exists a finite positive constant $\gamma$ such
    that for any $t_0$ and $x_0$ the corresponding solution satisfies
    $$||x(t)|| \leq \gamma ||x_0||,\qquad t\geq t_0.$$
\end{definition}

%%A system with a stable equilibrium may be referred to as a stable
%%system as all solutions of the system remain bounded.
%Although such solutions are bounded, they do not necessarily tend to
%equilibrium over time. For this additional property, we look to the
%following characterizations of stability.
    \begin{definition}\label{as_R}
    An equilibrium $x=0$ of \eqref{CLS} is \textit{asymptotically
    stable} if it is Lyapunov stable and there exists a $\delta>0$ such
    that if $||x(t_0)||<\delta$, then $\lim_{t\rightarrow\infty} ||x(t)|| =
    0$.
    Furthermore, an equilibrium $x=0$ of \eqref{CLS} is \textit{uniformly
    asymptotically stable} if it is uniformly stable and if given any
    $\delta > 0$, there exists a $T$ such that for any
    $t_0$ and $x_0$ the corresponding solution satisfies
    $$
    ||x(t)||<\delta||x_0||,\qquad t \geq t_0+T.
    $$
    \end{definition}
%Asymptotically stable solutions are not only bounded, but also will
%eventually converge to equilibrium.  If one can further establish an
%exponentially decaying upper bound for the rate of this convergence,
%then the equilibrium is said to be exponentially stable:
    \begin{definition}
    An equilibrium $x=0$ of \eqref{CLS} is \textit{exponentially
    stable} if it is asymptotically stable and there exist constants
    $\gamma,\lambda,\delta > 0$ such that if $||x(t_0)||<\delta$, then
        $$
        ||x(t)||\leq\gamma e^{-\lambda(t-t_0)}||x(t_0)||,\qquad t \geq t_0.$$
    Furthermore, $x=0$ of \eqref{CLS} is \textit{uniformly
    exponentially stable} if there exist $\gamma,\lambda >0$
    such that for any $t_0$ and $x_0$ the corresponding solution
    satisfies
        $$
        ||x(t)||\leq\gamma e^{-\lambda(t-t_0)}||x(t_0)||,\qquad t\geq t_0.
        $$
    \end{definition}
It is important to note that, in the context of linear systems,
uniform exponential stability and uniform asymptotic stability are
equivalent. That is, an equilibrium is uniformly exponentially
stable if and only if it is uniformly asymptotically stable. For a
straightforward proof of this, see Rugh \cite[Theorem~6.13]{Rugh}.

The above definitions characterize stability in terms of the
boundedness and convergence of solutions.  In order to establish
stability without explicit knowledge of such solutions, we look to
Lyapunov's Second (Direct) Method.

%Lyapunov's Second Method is, in essence, a generalized energy
%method. We seek a function $V(x)$ which measures in some sense the
%generalized energy of \eqref{CLS}. We then consider $\dot{V}(x)$ to
%observe the rate of change of that energy. If the energy of the
%system is not increasing, it follows that solutions of that system
%cannot grow without bound. Thus when $\dot{V}(x)\leq 0$, solutions
%remain bounded.  Furthermore, solutions of systems for which the
%total energy is always decreasing must eventually approach an
%equilibrium as the system evolves over time.
%
%Consider a mass suspended on a spring.  At rest, or equilibrium, the
%energy associated with this simple system is at a minimum and will
%remain so, barring intervention.  Any displacement, however small,
%would add energy to the system beginning an oscillatory motion of
%the mass. Without some dissipative force at work the motion and
%energy of the system would continue at a constant rate. Introducing
%a dissipative force (such as air friction) would begin to slow the
%motion of the mass and decrease the overall energy of the system. As
%a consequence of the decreasing energy, given sufficient time, the
%system state will approach equilibrium.  Thus changes in the energy
%of a system give us significant insight into the behavior of
%solutions of that system.  We can determine how solutions will
%behave over time without considering the solutions explicitly.
%
%To begin, we define a generalized energy function, or
%\textit{Lyapunov function}.

\begin{definition}\label{LyapfnR}
    $V:\R^n\rightarrow\R$ is a \textit{Lyapunov
    function} for \eqref{CLS} if
        \begin{itemize}
        \item [(i)]$V(x(t))\geq 0$ with equality if and only if $x=0$, and
        \item [(ii)]$\dot{V}(x(t))\leq 0.$
        \end{itemize}
\end{definition}

%%Without loss of generality, let $x=0$.
%Given the existence of such a function, we can conclude much
%concerning the stability of the corresponding system. Consider the
%following theorem presented by A.M. Lyapunov \cite{Ly}.

This leads to the celebrated theorem of A.M. Lyapunov \cite{Ly}.

    \begin{theorem}[Lyapunov's Second Theorem on $\R$]\label{LyapR}
    Given system \eqref{CLS} with equilibrium $x=0$, if there exists
    an associated Lyapunov function $V$, then $x=0$ is Lyapunov stable.
    Furthermore, if $\dot{V}(x(t))< 0$, then $x=0$ is asymptotically stable.
    \end{theorem}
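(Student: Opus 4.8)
The plan is to argue directly from Definition~\ref{LyapfnR}, using only the continuity of $V$ together with the fact that condition (ii) makes $t \mapsto V(x(t))$ nonincreasing along every solution of \eqref{CLS}. I would split the argument into the two assertions of the theorem.

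For Lyapunov stability I would fix $\varepsilon > 0$ and set $m := \min_{\|x\| = \varepsilon} V(x)$; this minimum exists and is strictly positive because the sphere $\{\|x\| = \varepsilon\}$ is compact and misses the origin, so (i) applies. Continuity of $V$ at $0$ with $V(0) = 0$ then yields $\delta \in (0,\varepsilon)$ with $V(x) < m$ on $\|x\| < \delta$. If $\|x(t_0)\| < \delta$, then $V(x(t_0)) < m$, and by (ii) $V(x(t)) \le V(x(t_0)) < m$ for all $t \ge t_0$; were $\|x(t)\|$ ever to reach the value $\varepsilon$, the first such time $t_1$ would satisfy $V(x(t_1)) \ge m$, a contradiction. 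Hence $\|x(t)\| < \varepsilon$ for $t \ge t_0$.

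For asymptotic stability under the strengthened hypothesis $\dot V(x(t)) < 0$, I would start from the $\varepsilon$, $\delta$ just produced, so that $\|x(t_0)\| < \delta$ keeps the solution in $\overline{B}_\varepsilon(0)$ for all $t\ge t_0$. Since $t \mapsto V(x(t))$ is nonincreasing and bounded below by $0$ it converges to some $c \ge 0$, and the crux is to show $c = 0$: if $c > 0$, continuity of $V$ at $0$ confines the solution to a compact annulus $K = \{\eta \le \|x\| \le \varepsilon\}$ on which $\tfrac{d}{dt}V(x(t)) \le -k$ for some $k > 0$, forcing $V(x(t)) \le V(x(t_0)) - k(t-t_0) \to -\infty$ and contradicting $V \ge 0$. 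Once $V(x(t)) \to 0$, a compactness argument finishes: if $\|x(t_n)\| \ge \eta > 0$ along some $t_n \to \infty$, a convergent subsequence $x(t_n) \to y \ne 0$ would force $V(x(t_n)) \to V(y) > 0$, which is impossible; hence $\|x(t)\| \to 0$.

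The step I expect to be the real obstacle is the uniform bound $\tfrac{d}{dt}V(x(t)) \le -k$ on $K$. Because \eqref{CLS} is time-varying, $\tfrac{d}{dt}V(x(t)) = \nabla V(x(t))\cdot A(t)x(t)$ depends on $t$ as well as on $x(t)$, so the bare inequality ``$\dot V < 0$'' of (ii) does not by itself supply a position-only lower bound on the decay rate. I would resolve this by reading (ii) in its natural strong form, namely that $\nabla V(x)\cdot A(t)x \le -W(x)$ for some continuous positive-definite $W$ and all $t$ (or, more crudely, by assuming $A(\cdot)$ bounded), which makes $\sup_{t}\tfrac{d}{dt}V(x(t))$ over $K$ attained and strictly negative. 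All remaining steps are routine continuity, compactness, and monotonicity estimates.
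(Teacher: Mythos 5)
The paper does not actually prove Theorem~\ref{LyapR}; it is quoted as the classical result of Lyapunov with a citation to \cite{Ly}, so there is no in-paper argument to compare yours against. What you have written is the standard textbook proof, and the Lyapunov-stability half is complete and correct: the minimum $m$ of $V$ on the sphere $\{\|x\\|=\varepsilon\}$ is positive by compactness and condition (i), continuity of $V$ (implicit in Definition~\ref{LyapfnR}, since $\dot V$ is assumed to exist) produces the $\delta$, and monotonicity of $t\mapsto V(x(t))$ plus continuity of the trajectory rules out escape. The convergence argument in the second half is also sound once the decay-rate bound on the annulus is available.

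The obstacle you flag in the last paragraph is real, and you are right to insist on it: because \eqref{CLS} has time-varying $A(t)$, the literal hypothesis ``$\dot V(x(t))<0$ for $x\neq 0$'' does not imply asymptotic stability. A concrete counterexample is the scalar system $\dot x=-e^{-t}x$ with $V(x)=x^2$: here $\dot V(x(t))=-2e^{-t}x^2(t)<0$ along every nonzero solution, yet $x(t)=x_0\exp\bigl(e^{-t}-e^{-t_0}\bigr)$ converges to a nonzero limit, so the origin is stable but not asymptotically stable. Thus the defect lies in the loose statement of the theorem (common when it is quoted for autonomous systems and then applied verbatim in the time-varying setting), not in your argument, and your proposed strengthening of (ii) to $\dot V(x(t))\le -W(x(t))$ for a continuous positive definite $W$ is exactly the standard hypothesis under which the annulus estimate $\dot V\le -k<0$ holds and the proof closes. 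With that reading of (ii), your proof is correct; for constant $A$ (or more generally whenever $\dot V$ depends on $t$ only through $x(t)$) the strengthening is automatic and the theorem holds as stated.
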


The power of this theorem is that one can deduce stability properties of solutions without direct knowledge of the solutions. We will discuss some techniques for finding suitable Lyapunov functions in the next few subsections.

%Note that the inability to discover a viable Lyapunov function does
%not imply that a system is unstable as the above theorem provides
%only a sufficient condition for stability.  Moreover, when a
%Lyapunov function is found for a given system, it is not necessarily
%unique. However, we only need to find one in order to establish
%stability.  We proceed to consider methods commonly used to obtain a
%Lyapunov function for a given system.
%
%%%%%%%%%%%%%%%%%%%%%%%%%%%%%%%%%%%%%%%%%%%%%%%%%%%%%%%%%%%%%%%
\subsection{The Continuous-time Algebraic Lyapunov Equation}
%%%%%%%%%%%%%%%%%%%%%%%%%%%%%%%%%%%%%%%%%%%%%%%%%%%%%%%%%%%%%%%
To satisfy the requirements of Theorem~\ref{LyapR} for asymptotic
stability, we seek a Lyapunov function, $V$, such that (a)
$V(x)>0$, with $\dot{V}(x)<0$ for $x\neq 0$, and (b)
$V(0)=\dot{V}(0)=0$.  For the system \eqref{CLS}, a common choice of
Lyapunov function candidate is the quadratic form, $V(x(t))=x^T(T)Px(t)$. We begin investigation of the stability of \eqref{CLS} by considering the time derivative of $V(x)$:
    \begin{align*}
    \dot{V}(x(t))&= \frac{d}{dt}[x^T(t)P x(t)]\\
         &= x^T(t)P\dot{x}(t) + \dot{x}^T(t) Px(t)\\
         &= x^T(t)PA(t)x + (A(t)x)^T Px(t)\\
         &= x^T(t)PA(t)x + x^T A^T(t) Px(t)\\
         &= x^T(t)[A^T(t)P + PA(t)]x(t).
    \end{align*}
The quadratic form of this derivative proves useful because if the
central quantity satisfies
    $$A^T(t) P + PA(t)<0,$$
then $\dot{V}(x(t))<0$.  Thus, for our purposes it is sufficient to seek a $P\in\S^+_n$ which satisfies the \textit{algebraic Lyapunov equation}
    \begin{equation}\label{CALE}
         A^T(t) P + PA(t)= -M(t),\tag{CALE}
    \end{equation}
where $M(t)\in\S_n^+$ is given. Here, $\S_n^+$ ($\S_n^-$) denotes the set of real, $n\times n$ positive (negative) definite symmetric matrices.

To distinguish this from other Lyapunov equations, we refer to \eqref{CALE} as
the \textit{continuous-time algebraic Lyapunov equation}. The
following theorem establishes a closed form solution of \eqref{CALE}.

\begin{theorem}\textup{\cite{Riccati,Rugh}}\label{CALEsolthm}
The unique solution of
    \begin{equation*}
        A^T(t) P + PA(t)  = -M(t),
    \end{equation*}
is given by
    \begin{equation}\label{CALEsol}
    P(t)=\int^\infty_{t_0} \Phi_A^T(s,t_0)M(t)\Phi_A(s,t_0)\,ds,
    \end{equation}
where $\Phi_A(t,t_0)$ is the transition matrix for system \eqref{CLS}, i.e., $\Phi_A(t,t_0)$ solves
    $$\dot{X}(t)=A(t)X(t), \qquad X(t_0)=I.$$
Moreover, $P\in\S_n^+$ whenever $M(t)\in\S_n^+$.
%\item[(iii)] $P$ is constant if and only if $M$ is constant.
\end{theorem}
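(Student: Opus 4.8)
The plan is to verify the closed-form candidate \eqref{CALEsol} directly, splitting the argument into four pieces: convergence of the improper integral, verification that $P$ solves the equation, uniqueness, and positive definiteness. Throughout I read \eqref{CALE} in the frozen-coefficient sense, treating $t$ as a parameter so that $\Phi_A(\cdot,t_0)$ plays the role of the matrix exponential of the (constant) coefficient $A(t)$; this is the only reading under which \eqref{CALEsol} makes sense.

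First, for the integral in \eqref{CALEsol} to define a matrix at all one needs the transition matrix to decay, so I would work under the (implicit) assumption that \eqref{CLS} is uniformly exponentially stable --- equivalently that $A$ is Hurwitz --- so that $\|\Phi_A(s,t_0)\|\le\gamma e^{-\lambda(s-t_0)}$ for some $\gamma,\lambda>0$. Then the integrand in \eqref{CALEsol} is dominated in norm by $\gamma^2\|M(t)\|\,e^{-2\lambda(s-t_0)}$, which is integrable on $[t_0,\infty)$; hence $P(t)$ is well defined, and it is manifestly symmetric since the integrand is. Next, to see that $P$ solves \eqref{CALE}, the key observation is that the integrand is an exact derivative in $s$: since $\partial_s\Phi_A(s,t_0)=A\,\Phi_A(s,t_0)=\Phi_A(s,t_0)A$, one has
$$\frac{d}{ds}\bigl[\Phi_A^T(s,t_0)\,M(t)\,\Phi_A(s,t_0)\bigr] = A^T\Phi_A^T(s,t_0)M(t)\Phi_A(s,t_0) + \Phi_A^T(s,t_0)M(t)\Phi_A(s,t_0)A.$$
Left-multiplying $P(t)$ by $A^T$, right-multiplying by $A$, and adding therefore produces $\int_{t_0}^\infty \frac{d}{ds}\bigl[\Phi_A^T M(t)\Phi_A\bigr]\,ds$, which by the Fundamental Theorem of Calculus equals $\bigl[\Phi_A^T(s,t_0)M(t)\Phi_A(s,t_0)\bigr]_{s=t_0}^{s=\infty}=0-M(t)=-M(t)$, using $\Phi_A(t_0,t_0)=I$ at the lower limit and the exponential decay at the upper limit. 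That is exactly \eqref{CALE}.

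For uniqueness, I would suppose $P_1$ and $P_2$ both satisfy the equation and set $D:=P_1-P_2$, so that $A^TD+DA=0$. The same computation gives $\frac{d}{ds}\bigl[\Phi_A^T(s,t_0)D\Phi_A(s,t_0)\bigr]=\Phi_A^T(s,t_0)(A^TD+DA)\Phi_A(s,t_0)=0$, so $\Phi_A^T(s,t_0)D\Phi_A(s,t_0)$ is constant in $s$; evaluating at $s=t_0$ gives $D$, while letting $s\to\infty$ gives $0$, whence $D=0$. Finally, for definiteness: if $M(t)\in\S_n^+$, then for any $x\ne 0$,
$$x^TP(t)x = \int_{t_0}^\infty \bigl(\Phi_A(s,t_0)x\bigr)^T M(t)\bigl(\Phi_A(s,t_0)x\bigr)\,ds > 0,$$
because $\Phi_A(s,t_0)$ is invertible (so $\Phi_A(s,t_0)x\ne 0$ for every $s$) and $M(t)$ is positive definite, making the integrand strictly positive; hence $P(t)\in\S_n^+$.

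The step I expect to be the genuine sticking point is the convergence of the improper integral: the rest is bookkeeping, but the formula \eqref{CALEsol} only makes sense --- and the boundary term at $s=\infty$ only vanishes --- when the underlying system is (uniformly) exponentially stable, which must be assumed or read off from the cited sources. A secondary subtlety is the commutation $A\Phi_A=\Phi_A A$ used to recognize the integrand as a total derivative; this is automatic in the frozen-coefficient reading of the equation, and one should state explicitly that this is the setting in which \eqref{CALEsol} is to be understood.
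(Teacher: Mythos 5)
The paper gives no proof of Theorem~\ref{CALEsolthm} --- it is quoted from \cite{Riccati,Rugh} --- so there is nothing internal to compare against; your argument is the standard one from those sources (differentiate $\Phi_A^T M\Phi_A$ in $s$, integrate, and read off the boundary terms) and it is correct. You also correctly identify the two hypotheses the paper leaves implicit: the Hurwitz/uniform-exponential-stability condition needed for convergence and for the vanishing of the boundary term at $s=\infty$ (the paper only surfaces this in the corollary, via $\Re\lambda<0$ for $\lambda\in\spec A$), and the frozen-coefficient reading under which $A$ commutes with $\Phi_A$ --- without which the displayed formula does not solve the time-varying equation as literally written. Your uniqueness and positive-definiteness arguments are likewise sound.
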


\begin{corollary}
When $A(t)\equiv A$ and $M(t)\equiv M$, the unique solution of \eqref{CALE}
is the constant
    $$
    P=\int^\infty_{t_0} e^{A^T(s-t_0)}Me^{A(s-t_0)}\,ds,
    $$
which converges when $\Re\lambda<0$ for all $\lambda\in\spec A$.
\end{corollary}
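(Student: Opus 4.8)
The plan is to obtain the corollary as a direct specialization of Theorem~\ref{CALEsolthm}. First I would observe that when $A(t)\equiv A$ is constant, the transition matrix for \eqref{CLS} is the matrix exponential, $\Phi_A(s,t_0)=e^{A(s-t_0)}$, since $X(t)=e^{A(t-t_0)}$ solves $\dot X(t)=AX(t)$ with $X(t_0)=I$. Substituting this, together with $M(t)\equiv M$, into the solution formula \eqref{CALEsol} yields
$$P=\int_{t_0}^\infty e^{A^T(s-t_0)}Me^{A(s-t_0)}\,ds,$$
and Theorem~\ref{CALEsolthm} guarantees that this is the unique solution and that $P\in\S_n^+$ whenever $M\in\S_n^+$. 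The change of variables $u=s-t_0$ also shows that $P$ is independent of $t_0$, which is consistent with the fact that a constant $P$ must solve the now genuinely algebraic equation $A^TP+PA=-M$.

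Next I would verify convergence of the improper integral. The key ingredient is the standard decay estimate for the matrix exponential: if $\Re\lambda<0$ for every $\lambda\in\spec A$, then there exist constants $C\ge 1$ and $\alpha>0$ (any $\alpha$ with $0<\alpha<-\max_{\lambda\in\spec A}\Re\lambda$ works) such that $\|e^{At}\|\le Ce^{-\alpha t}$ for all $t\ge 0$; the same bound holds for $e^{A^Tt}$ since $A$ and $A^T$ have the same spectrum. Consequently the integrand satisfies
$$\bigl\|e^{A^T(s-t_0)}Me^{A(s-t_0)}\bigr\|\le C^2\|M\|\,e^{-2\alpha(s-t_0)},$$
and the right-hand side is integrable on $[t_0,\infty)$, so the integral defining $P$ converges absolutely.

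The only step requiring genuine care — and hence the main obstacle — is the exponential decay estimate, since this is precisely where the eigenvalue hypothesis enters. I would justify it using the Jordan decomposition $A=S(D+N)S^{-1}$, where $D$ is diagonal carrying the eigenvalues, $N$ is nilpotent and commutes with $D$; then $e^{At}=S\,e^{Dt}e^{Nt}\,S^{-1}$, with $e^{Nt}$ polynomial in $t$ and $\|e^{Dt}\|\le e^{(\max_{\lambda}\Re\lambda)t}$. Absorbing the polynomial growth into a slightly larger constant and a slightly smaller exponent gives the claimed bound. Alternatively, one may simply invoke this as a classical fact (e.g. Rugh~\cite{Rugh}). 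Once the estimate is available, the remainder of the argument is the routine substitution into Theorem~\ref{CALEsolthm} described above, so nothing further is needed.
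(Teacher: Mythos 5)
Your proposal is correct and matches the paper's (implicit) argument: the paper states this corollary as an immediate specialization of Theorem~\ref{CALEsolthm}, obtained by substituting $\Phi_A(s,t_0)=e^{A(s-t_0)}$ and $M(t)\equiv M$ into \eqref{CALEsol}, with convergence following from the standard exponential decay of $e^{At}$ when $\spec A$ lies in the open left half-plane. Your additional justification of the decay estimate via the Jordan decomposition is sound, though the paper simply leaves this as a classical fact.
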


Note that it is the quadratic form of the solution that allows us to
conclude $M\in\S^+_n$ implies $P\in\S^+_n$. It follows that existence of the quantity in
\eqref{CALEsol} implies the existence of a Lyapunov function
satisfying the requirements of Theorem~\ref{LyapR} and ensures
stability of system \eqref{CLS}.  Thus, \eqref{CALE}
straightforwardly leads to a viable Lyapunov function that we seek for a stability analysis of the underlying system \eqref{CLS}.

%%%%%%%%%%%%%%%%%%%%%%%%%%%%%%%%%%%%%%%%%%%%%%%%%%%%%%%%%%%%%%%
\subsection{The Continuous-time Differential Lyapunov Equation}
%%%%%%%%%%%%%%%%%%%%%%%%%%%%%%%%%%%%%%%%%%%%%%%%%%%%%%%%%%%%%%%
On the other hand, suppose we seek a Lyapunov function of the form $V(x(t))=x^T(t) P(t)
x(t)$, the emphasis being that $P$ is time varying. Then
    $$
    \begin{aligned}
    \dot{V}(x(t))&= \frac{d}{dt}[x^T(t) P(t) x(t)]\\
         &= x^T(t)[P(t)\dot{x}(t) + \dot{P}(t)x] + \dot{x}^T P(t)x(t)\\
         &= x^T(t)[P(t)A(t)x +\dot{P}(t)x] + (A(t)x)^T P(t)x(t)\\
         &= x^T(t)[P(t)A(t)x +\dot{P}(t)x] + x^T A^T(t) P(t)x(t)\\
         &= x^T(t)[A^T(t) P(t) + P(t)A(t) + \dot{P}(t)]x(t).
    \end{aligned}
    $$
The quadratic form of this derivative proves useful because, if the
central quantity satisfies
    $$A^T(t) P(t) + P(t)A(t) + \dot{P}(t)<0,$$
then $\dot{V}(x)<0$.  Thus we seek a $P(t)\in\S^+_n$ which satisfies the \textit{differential Lyapunov equation}
    \begin{equation}\label{CDLE}
         A^T(t) P(t) + P(t)A(t) +\dot{P}(t)= -M(t),\tag{CDLE}
    \end{equation}
where $M(t)\in\S^+_n$ is specified.

To distinguish this from other
Lyapunov equations, we refer to \eqref{CDLE} as
the \textit{continuous-time differential Lyapunov equation}.  The
following theorem establishes a closed form solution of
\eqref{CDLE}.

\begin{theorem} \textup{\cite{Riccati}}
The unique solution of
    \begin{equation*}
         A^T(t) P(t) + P(t)A(t) +\dot{P}(t)= -M(t),\quad P(t_0)=P_0,
    \end{equation*}
is given by
    \begin{equation}\label{CDLEsol}
        P(t)=(\Phi_A^T(t,t_0))^{-1}P(t_0)(\Phi_A(t,t_0))^{-1}-\int^t_{t_0}
            \Phi_A^T(t,s)M(s)\Phi_A(t,s)\,ds,
    \end{equation}
where $\Phi_A(t,t_0)$ is the transition matrix for system \eqref{CLS}. Moreover, $P(t)\in\S_n^+$ whenever $M(t)\in\S_n^+$.
\end{theorem}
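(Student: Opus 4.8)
The plan is to reduce \eqref{CDLE} to an exact matrix differential equation by conjugating with the transition matrix, integrate, and then extract positive-definiteness from the resulting quadratic form, exactly as in the algebraic case. Write $\Phi(t):=\Phi_A(t,t_0)$, so that $\dot\Phi(t)=A(t)\Phi(t)$, $\Phi(t_0)=I$, each $\Phi(t)$ is invertible, and $\frac{d}{dt}\Phi^T(t)=\Phi^T(t)A^T(t)$. The key observation is that pre- and post-multiplying the left-hand side of \eqref{CDLE} by $\Phi^T$ and by $\Phi$ yields a perfect derivative,
$$
\Phi^T(t)[A^T(t)P(t)+P(t)A(t)+\dot P(t)]\Phi(t)=\frac{d}{dt}[\Phi^T(t)P(t)\Phi(t)];
$$
this is no accident, being precisely the computation of $\dot V$ along the trajectories $x(t)=\Phi(t)x_0$ of \eqref{CLS} with $V=x^TPx$ already carried out in the derivation of \eqref{CDLE}. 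Hence, if $P$ solves \eqref{CDLE}, then $\frac{d}{dt}[\Phi^T P\Phi]=-\Phi^T M\Phi$, and integrating from $t_0$ to $t$ with $(\Phi^T P\Phi)(t_0)=P(t_0)$ gives
$$
\Phi^T(t)P(t)\Phi(t)=P(t_0)-\int_{t_0}^{t}\Phi^T(s)M(s)\Phi(s)\,ds.
$$

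Next I would solve for $P(t)$ by left-multiplying by $(\Phi^T(t))^{-1}$ and right-multiplying by $\Phi^{-1}(t)$, and then use the composition law $\Phi_A(t,t_0)=\Phi_A(t,s)\Phi_A(s,t_0)$ to rewrite $(\Phi^T(t))^{-1}\Phi^T(s)$ and its transpose, which recasts the identity into the form \eqref{CDLEsol}. For uniqueness, \eqref{CDLE} together with the prescribed value $P(t_0)=P_0$ is a linear initial value problem in $\R^{n\times n}$, so the Picard--Lindel\"of theorem applies; equivalently, the difference $Y=P_1-P_2$ of two solutions solves the homogeneous equation $A^TY+YA+\dot Y=0$ with $Y(t_0)=0$, so $\frac{d}{dt}(\Phi^TY\Phi)\equiv0$ forces $\Phi^T(t)Y(t)\Phi(t)\equiv\Phi^T(t_0)Y(t_0)\Phi(t_0)=0$ and hence $Y\equiv0$.

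The delicate step, and the one I expect to be the main obstacle, is that $P(t)$ inherits positive-definiteness. From \eqref{CDLEsol} we find that, for every $v\neq0$,
$$
v^TP(t)v=(\Phi_A^{-1}(t,t_0)v)^TP(t_0)(\Phi_A^{-1}(t,t_0)v)-\int_{t_0}^{t}(\Phi_A(t,s)v)^TM(s)(\Phi_A(t,s)v)\,ds.
$$
Since $\Phi_A(t,t_0)$ is invertible, the boundary term is strictly positive exactly when $P(t_0)\in\S_n^+$ (positivity of the initial data must therefore be read into the hypotheses), while the integral term is only nonnegative when $M(s)\in\S_n^+$; thus $P(t)>0$ does not follow from $M(t)\in\S_n^+$ alone, and the real work is in forcing the boundary term to dominate the accumulated integral. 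The cleanest way to do this is a monotonicity/comparison argument against the algebraic equation: letting $\overline P$ solve $A^T\overline P+\overline P A=-M$ (the Corollary after Theorem~\ref{CALEsolthm} furnishes such a $\overline P$ when $A$ and $M$ are constant, so that $\dot{\overline P}=0$), the difference $D(t):=P(t)-\overline P$ satisfies the homogeneous differential Lyapunov equation, whence $D(t)=(\Phi_A^T(t,t_0))^{-1}D(t_0)(\Phi_A(t,t_0))^{-1}$ is a congruence of $D(t_0)$; consequently $P(t)\succeq\overline P\in\S_n^+$ for all $t\ge t_0$ provided $P(t_0)\succeq\overline P$. I would therefore either strengthen the hypothesis to include $P(t_0)\succeq\overline P$ (replaced, in the time-varying case, by a Gr\"onwall-type refinement) or else state the conclusion over a finite horizon on which the quadratic-form identity above keeps $v^TP(t)v$ positive; the remaining bookkeeping is then routine.
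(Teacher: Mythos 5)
Your core derivation is exactly the paper's. This theorem is only cited to the literature, but the paper proves its generalization (Theorem~\ref{TSDLEthm}) by the identical device: multiply \eqref{TSDLE} on the left and right by $\Phi_A^T(t,t_0)$ and $\Phi_A(t,t_0)$, recognize the left side as the derivative $[\Phi_A^T P\,\Phi_A]^\Delta$, integrate, and rearrange; your treatment of \eqref{CDLE}, uniqueness included, is the $\T=\R$ specialization of that argument. Where you genuinely diverge is on the claim that $M(t)\in\S_n^+$ implies $P(t)\in\S_n^+$. You are right to flag it: it does not follow from \eqref{CDLEsol} for arbitrary $P_0$, since the congruence term is positive definite while the subtracted integral is positive semidefinite, and nothing in the formula forces the difference to remain positive. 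The paper concedes precisely this point in the time scale setting (``It is not clear that $P(t)$ is in fact positive definite from its form in \eqref{TSDLEsol}'') and resolves it not by your comparison against the algebraic equation \eqref{CALE}, but by the special initial condition $P_0=\int_{t_0}^\infty\Phi_A^T(s,t_0)M(s)\Phi_A(s,t_0)\,\Delta s$ of Theorem~\ref{TSDLEICthm}, which telescopes the two terms of \eqref{TSDLEsol} into the single manifestly quadratic form $P(t)=\int_t^\infty\Phi_A^T(s,t)M(s)\Phi_A(s,t)\,\Delta s$; a subsequent remark shows any other choice of $P_0$ even makes the solution unbounded. That route buys more than yours: it requires no constancy of $A$ or $M$ (your comparison matrix $\overline P$ is only readily available in the constant-coefficient case), it yields positive definiteness for all $t\ge t_0$ rather than on a finite horizon, and it pins down the unique admissible $P_0$ rather than the one-sided sufficient condition $P(t_0)\succeq\overline P$. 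On $\R$ with constant data the two fixes coincide, since that $P_0$ is then exactly the solution of \eqref{CALE}.
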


\begin{remark}
From the derivation of \eqref{CALE}, we see that the constant
solution of \eqref{CALE} is in fact a steady state solution of \eqref{CDLE} provided the initial condition $P(t_0)$ is chosen to be this constant solution of \eqref{CALE}.
\end{remark}

%Given system \eqref{CLS}, the ability to write a closed form
%solution of \eqref{CDLE} allows us to generate a $P(t)$ for which
%$\dot{V}(x)<0$.  Furthermore, if $P(t)>0$ for all $t\in\T$  it
%follows, from the quadratic form of the Lyapunov function candidate,
%that $V(x)$ is also.  For such a $P(t)$, $V(x)=x^T P(t) x$ is a
%viable Lyapunov function, satisfying the requirements of Theorem
%\ref{LyapR} and establishing the stability of system \eqref{CLS}.

In light of the remark above, it is not surprising that \eqref{CALE} takes precedence over \eqref{CDLE} in the literature---matrix {\it algebraic} equations are much easier to solve than matrix {\it differential} equations. Even so, \eqref{CDLE} is an interesting problem in its own right. There are problems in which the time varying nature of a system makes \eqref{CDLE} useful, especially in the context of periodic systems \cite{Riccati,DeMa}. However, when the time dependence is not of interest, or has minimal impact on the system (e.g., slowly time varying systems), it is more efficient to consider the algebraic equation \eqref{CALE} to obtain simpler, steady state solutions of the differential equation \eqref{CDLE}.

%%%%%%%%%%%%%%%%%%%%%%%%%%%%%%%%%%%%%%%%%%%%%%%%%%%%%%%%%%%%
\section{Stability of Discrete-time Systems}\label{Z}
%%%%%%%%%%%%%%%%%%%%%%%%%%%%%%%%%%%%%%%%%%%%%%%%%%%%%%%%%%%%
We now turn our attention to the discrete analogue of the continuous
system \eqref{CLS} analyzed in the preceeding section.  Let $t\in\Z$
and consider the discrete linear system
    \begin{equation}\label{DLS}
        \Delta x(t) = A(t)x(t),
    \end{equation}
for $A\in\R^{n\times n}$, and $t\in\Z$, where $\Delta x(t):=x(t+1)-x(t)$ is the
usual forward difference operator. Rearranging \eqref{DLS} and defining
$A_R(t):=A(t)+I$, we can write \eqref{DLS} in its
(possibly more familiar) equivalent recursive form
    \begin{equation*}%\label{DLSr}
        x(t+1)= A_R(t) x(t).
    \end{equation*}

We begin with several characterizations of stability. We include these in order to compare our generalized results in the next section to the standard results in the continuous and discrete settings.

%As in the continuous case, stability of discrete-time systems is
%characterized in terms of the boundedness and convergence of
%solutions.

    \begin{definition}
    Let $t\in\Z$. An equilibrium $x=0$ of \eqref{DLS} is \textit{Lyapunov stable} or
    \textit{stable in the sense of Lyapunov} if, for every
    $\varepsilon>0$, there exists a $\delta=\delta(\varepsilon)>0$ such that
    if $||x(t_0)||<\delta$, then $||x(t)||<\varepsilon$, for every $t \geq
    t_0$.  An equilibrium $x=0$ of \eqref{DLS} is called \textit{uniformly
    stable} if there exists a finite positive constant $\gamma$ such
    that for any $t_0$ and $x_0$ the corresponding solution satisfies
    $$
    ||x(t)|| \leq \gamma ||x_0||,\qquad t\geq t_0.
    $$
    \end{definition}
    \begin{definition}
    Let $t\in\Z$. An equilibrium $x=0$ of \eqref{DLS} is \textit{asymptotically
    stable} if it is Lyapunov stable and there exists a $\delta>0$ such
    that if $||x(t_0)||<\delta$, then $\lim_{t\rightarrow\infty} ||x(t)|| =
    0$. Furthermore, an equilibrium $x=0$ of \eqref{DLS} is \textit{uniformly
    asymptotically stable} if it is uniformly stable and if given any
    positive constant $\delta$ there exists a $T$ such that for any
    $t_0$ and $x_0$ the corresponding solution satisfies
    $$
    ||x(t)||<\delta||x_0||,\qquad t \geq t_0+T.
    $$
    \end{definition}
    \begin{definition}
    Let $t\in\Z$. An equilibrium $x=0$ of \eqref{DLS} is \textit{exponentially
    stable} if it is asymptotically stable and there exist constants
    $\gamma,\delta>0$ and $0 \leq \lambda < 1$ such that if $||x(t_0)||<\delta$,
    then
    $$
    ||x(t)||\leq||x(t_0)||\gamma \lambda^{t-t_0}, \qquad t \geq t_0.
    $$
    Furthermore, $x=0$ of \eqref{DLS} is \textit{uniformly
    exponentially stable} if there exist finite positive constants
    $\gamma$ and $0 \leq \lambda < 1$ such that for any $t_0$ and $x_0$ the
    corresponding solution satisfies
    $$
        ||x(t)||\leq||x(t_0)||\gamma \lambda^{t-t_0},\qquad t\geq
        t_0.
    $$
    \end{definition}

With the goal of analyzing stability of \eqref{DLS} without explicit
knowledge of solutions, we explore the application of Lyapunov's
Second Method in the context of discrete-time systems.

    \begin{definition}\label{LyapfnZ}
    $V(x):\R^n\rightarrow\R$ is a \textit{Lyapunov
    function} for system \eqref{DLS} if
        \begin{itemize}
        \item [(i)]$V(x)\geq 0$ with equality if and only if $x=0$, and
        \item [(ii)]$\Delta V(x(t))\leq 0.$
        \end{itemize}
    \end{definition}

    \begin{theorem}[Lyapunov's Second Theorem on $\Z$]\label{LyapZ}
    Given system \eqref{DLS} with equilibrium $x=0$, if there exists
    an associated Lyapunov function $V(x)$, then $x=0$ is Lyapunov stable.
    Furthermore, if $\Delta V(x(t))< 0$, then $x=0$ is asymptotically stable.
    \end{theorem}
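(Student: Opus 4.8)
The plan is to follow the classical proof of Lyapunov's theorem on $\R$ (Theorem~\ref{LyapR}), replacing time derivatives by forward differences and the intermediate value argument along a continuous trajectory by an argument on sublevel sets of $V$. Throughout I take $V$ to be continuous, as it is in every case of interest (e.g.\ the quadratic form $V(x)=x^TPx$ with $P\in\S_n^+$). For Lyapunov stability, fix $\varepsilon>0$ and put $m:=\min_{\|x\|=\varepsilon}V(x)$; the sphere $\{\|x\|=\varepsilon\}$ is compact and $V$ is positive definite there, so $m>0$. By continuity of $V$ and $V(0)=0$ there is $\delta\in(0,\varepsilon)$ with $V(x)<m$ whenever $\|x\|<\delta$. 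If $\|x(t_0)\|<\delta$, then $V(x(t_0))<m$, and since $\Delta V(x(t))\le 0$ the scalar sequence $\{V(x(t))\}_{t\ge t_0}$ is nonincreasing, so $V(x(t))\le V(x(t_0))<m$ for all $t\ge t_0$. It then remains to conclude $\|x(t)\|<\varepsilon$. This is the one place the discrete setting genuinely differs from the continuous one: $t\mapsto x(t)$ can ``jump'' over the sphere $\{\|x\|=\varepsilon\}$, so one cannot invoke the intermediate value theorem on $\|x(t)\|$ directly. Instead I would pass to the connected component $W$ of the open set $\{V<m\}$ containing the origin; applying the intermediate value theorem to $\|\cdot\|$ along paths inside $W$ shows $W\subseteq\{\|x\|<\varepsilon\}$, and one checks that the trajectory, which starts in $W$ and stays in $\{V<m\}$, cannot leave $W$ — automatic when the sublevel sets of $V$ are connected, in particular for $V(x)=x^TPx$.

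For asymptotic stability, suppose in addition $\Delta V(x(t))<0$. Lyapunov stability is already established, so a trajectory starting with $\|x(t_0)\|<\delta$ is bounded, say $\|x(t)\|<\varepsilon$ for all $t\ge t_0$. Now $\{V(x(t))\}$ is strictly decreasing and bounded below by $0$, hence converges to some $L\ge 0$. To see $L=0$: if $L>0$, continuity and positive definiteness of $V$ give $\rho>0$ with $V(x)<L$ on $\{\|x\|<\rho\}$, so $x(t)$ stays in the compact annulus $K=\{\rho\le\|x\|\le\varepsilon\}$; quantifying the strict decrease of $V$ along the trajectory on $K$ then forces $V(x(t))\to-\infty$, a contradiction. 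Finally, $V(x(t))\to 0$ together with boundedness of $\{x(t)\}$ and continuity and positive definiteness of $V$ forces every limit point of $x(t)$ to equal $0$, i.e.\ $x(t)\to 0$.

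The step I expect to be the real obstacle is the ``no jump'' point in the stability argument — arguing, in the general (non-quadratic, possibly time-varying) case, that the discrete trajectory cannot hop out of the neighborhood cut out by the sublevel set — and, relatedly, making the decrease of $V$ on the annulus $K$ uniform in $t$ for the asymptotic claim. Both are transparent for the quadratic candidate $V(x)=x^TPx$ (convex, hence connected, sublevel sets), which is exactly what the following sections use, so in the write-up I would either restrict to Lyapunov candidates with connected sublevel sets or simply run the argument for that quadratic form.
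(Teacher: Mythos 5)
The paper offers no proof of Theorem~\ref{LyapZ}: like Theorem~\ref{LyapR}, it is quoted as a classical result, so there is no in-house argument to compare yours against. Taken on its own terms, your argument is the standard Lyapunov sublevel-set proof, correctly transplanted to discrete time, and you have put your finger on the one step that genuinely changes: a discrete orbit can exit the ball $\{\|x\|<\varepsilon\}$ without ever meeting the sphere, so the intermediate value theorem must be replaced by the containment $\{V<m\}\subseteq\{\|x\|<\varepsilon\}$. For the quadratic candidates the paper actually constructs this is immediate and needs no connectedness detour: with $V(x)=x^TPx$, $P\in\S_n^+$, one has $m=\lambda_{\min}(P)\varepsilon^2$ and $x^TPx<m$ forces $\lambda_{\min}(P)\|x\|^2<\lambda_{\min}(P)\varepsilon^2$, i.e.\ $\|x\|<\varepsilon$. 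Note also that Definition~\ref{LyapfnZ} does not require $V$ to be continuous, so your standing continuity assumption is an added (and necessary) hypothesis.

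The two ``expected obstacles'' you flag are genuine gaps in the general statement, not presentational nuisances, and the write-up should resolve them by restriction rather than by promising to ``check'' them. First, if the sublevel set $\{V<m\}$ is disconnected, a discrete map can hop between its components while still decreasing $V$, so the claim that the orbit stays in the component $W$ of the origin can simply fail; connectedness (e.g.\ convexity) of the sublevel sets must be assumed, as it is for the quadratic form. Second, in the asymptotic half you need a \emph{uniform} bound $\Delta V(x(t))\le -c<0$ while the orbit remains in the compact annulus $K$. For an autonomous system with continuous $V$ this follows from compactness of $K$; but \eqref{DLS} is time-varying, and the pointwise hypothesis $\Delta V(x(t))<0$ does not by itself produce such a $c$ independent of $t$ --- this is exactly why rigorous statements for nonautonomous systems require $\Delta V\le -W(x)$ for some positive definite $W$ (which the choice $\Delta V=-x^TMx$ with $M\in\S_n^+$ in the subsequent sections supplies). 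With those two restrictions made explicit --- quadratic (or at least continuous, connected-sublevel-set) $V$ and a uniform negative-definite bound on $\Delta V$ --- your proof is complete and matches the standard one.
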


%%%%%%%%%%%%%%%%%%%%%%%%%%%%%%%%%%%%%%%%%%%%%%%%%%%%%%%%%%%%
\subsection{The Discrete-time Algebraic Lyapunov Equation}
%%%%%%%%%%%%%%%%%%%%%%%%%%%%%%%%%%%%%%%%%%%%%%%%%%%%%%%%%%%%
We begin with the same choice of Lyapunov function candidate
utilized in the continuous case, $V(x)=x^T(t) Px(t)$. Then
    \begin{align*}
        \Delta V(x(t))&= V(x(t+1))-V(x(t))\\
            &= x^T(t+1)Px(t+1)- x^T(t)Px(t)\\
            &= x^T(t)(A(t)+I)^T P(A(t)+I)x(t) -
                x^T(t)Px(t)\\
            &= x^T(t) [(A(t)+I)^T P(A(t)+I) - P] x(t)\\
            &= x^T(t) [ A(t)P + PA(t) + A(t)PA(t)]x(t).
    \end{align*}
If the central quantity satisfies
    $$
    A(t)P+PA(t)+A(t)PA(t)<0,
    $$
then $\Delta V(x)<0$.  Therefore,
we seek a $P(t)\in\S^+_n$ satisfying the \textit{discrete-time algebraic Lyapunov equation},
    \begin{equation}\label{DALE}
    A(t)P + P A(t) + A(t)PA(t)=-M(t),\tag{DALE}
    \end{equation}
for a given $M(t)\in\S^+_n$.

Equivalently, \eqref{DALE} also has the recursive form
    \begin{equation}\label{DALEr}
        A^T_R(t) P A_R(t) - P = -M(t), \tag{DALEr}
    \end{equation}
where $A_R(t):=A(t)+I$. This form seems to be more common in the literature on Lyapunov analysis of discrete linear systems \cite{NaBa,Rugh,TiMa}.

The following theorem establishes a closed form solution of \eqref{CALE}.

\begin{theorem}\textup{\cite{Riccati,Rugh}}
For $A(t)\equiv A$ and $M(t)\equiv M$, the unique solution of
\eqref{DALEr} is the constant
    \begin{equation*}%\label{DALErsol}
        P = \sum^\infty_{j=0} (A_R^T)^j M A_R^j.
    \end{equation*}
Moreover, $P\in\S_n^+$ whenever $M\in\S_n^+$. The sum converges provided $|\lambda|<1$ for all $\lambda\in\spec A$.
\end{theorem}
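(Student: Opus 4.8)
The plan is to run the argument in parallel with the continuous-time result, Theorem~\ref{CALEsolthm}, replacing the transition matrix by the powers $A_R^{\,j}$ of $A_R=A+I$ and the integral by the series. Four things must be checked: (i) the series defining $P$ converges; (ii) this $P$ satisfies \eqref{DALEr}; (iii) it is the \emph{only} solution; and (iv) $M\in\S_n^+$ forces $P\in\S_n^+$. As the final remark of the preceding subsection observes, \eqref{DALE} and \eqref{DALEr} are equivalent, so it suffices to work with \eqref{DALEr}.

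\emph{Convergence.} The series $\sum_{j\ge 0}(A_R^T)^jMA_R^j$ converges absolutely (in any matrix norm) provided $\rho(A_R)<1$, where $\rho$ denotes spectral radius; this is the content of the stated spectral hypothesis once it is phrased through $A_R$. Given $\rho(A_R)<1$, fix $\alpha$ with $\rho(A_R)<\alpha<1$; then there is a constant $c\ge 1$ with $\|A_R^{\,j}\|\le c\,\alpha^{\,j}$ for all $j\ge 0$ — obtained either by passing to a vector norm adapted to $A_R$, or from the Jordan-form bound $\|A_R^{\,j}\|\le C(1+j)^{n-1}\rho(A_R)^j$, or directly from Gelfand's formula. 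Hence $\|(A_R^T)^jMA_R^j\|\le c^2\|M\|\,\alpha^{2j}$, a convergent geometric majorant, so $P$ is a well-defined matrix, and since each summand is symmetric and symmetry is preserved under norm limits, $P$ is symmetric.

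\emph{$P$ solves \eqref{DALEr}, uniqueness.} Because left multiplication by $A_R^T$ and right multiplication by $A_R$ are continuous linear maps, one may pass them through the sum and reindex:
\[
A_R^T P A_R - P \;=\; \sum_{j=0}^{\infty}(A_R^T)^{j+1}MA_R^{j+1}\;-\;\sum_{j=0}^{\infty}(A_R^T)^{j}MA_R^{j}\;=\;-M,
\]
every term cancelling except the $j=0$ term of the second series; absolute convergence of both series legitimizes the rearrangement. For uniqueness, if $P_1$ and $P_2$ both solve \eqref{DALEr}, then $D:=P_1-P_2$ satisfies $A_R^T D A_R=D$, hence $D=(A_R^T)^jDA_R^j$ for every $j$, so $\|D\|\le\|A_R^{\,j}\|^2\|D\|\to 0$ as $j\to\infty$; thus $D=0$. (Equivalently, the Stein operator $X\mapsto X-A_R^TXA_R$ on $n\times n$ matrices has eigenvalues $1-\lambda_i\lambda_k$ with $\lambda_i,\lambda_k\in\spec A_R$, none of which vanishes when $\rho(A_R)<1$, so it is invertible, yielding existence and uniqueness at once.)

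\emph{Positive definiteness.} If $M\in\S_n^+$, then for $x\ne 0$ one has $x^T(A_R^T)^jMA_R^jx=(A_R^{\,j}x)^TM(A_R^{\,j}x)\ge 0$, with the $j=0$ term equal to $x^TMx>0$; summing, $x^TPx\ge x^TMx>0$, and together with the symmetry already established this gives $P\in\S_n^+$. The one step that genuinely requires care is the convergence estimate: $\rho(A_R)<1$ does \emph{not} by itself give $\|A_R\|<1$ in the Euclidean operator norm, so the uniform geometric decay of $\|A_R^{\,j}\|$ must be extracted via an adapted norm or a Jordan/Gelfand argument. Once that decay bound is in hand, the telescoping identity, the uniqueness iteration, and the quadratic-form positivity argument are all routine.
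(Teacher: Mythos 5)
Your proof is correct and complete; note that the paper itself offers no proof of this theorem---it simply cites \cite{Riccati,Rugh}---so there is no internal argument to compare against, and your write-up supplies exactly the standard one: convergence from a geometric bound on $\|A_R^j\|$, verification by telescoping, uniqueness either by iterating $A_R^TDA_R=D$ or by invertibility of the Stein operator, and positivity from the quadratic form. Each of these steps is sound, and your caution that a spectral radius below $1$ does not give $\|A_R\|<1$ in the Euclidean operator norm (so the decay of $\|A_R^j\|$ must come from an adapted norm, a Jordan-form estimate, or Gelfand's formula) is exactly the right place to be careful. One point you gloss over deserves to be made explicit: you quietly replaced the paper's stated hypothesis with the one that actually makes the proof work. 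As printed, the theorem asks for $|\lambda|<1$ for all $\lambda\in\spec A$; but since the recursion matrix is $A_R=A+I$, the spectrum of $A_R$ is $1+\spec A$, and $|\lambda|<1$ on $\spec A$ does \emph{not} yield a spectral radius of $A_R$ below $1$ (take $A=0$: then $A_R=I$ and the series reduces to $\sum_j M$, which diverges for $M\neq 0$). The correct hypothesis, consistent with the Hilger-circle stability region for $\T=\Z$ shown in Table~\ref{comparisons}, is $|1+\lambda|<1$ for all $\lambda\in\spec A$, equivalently that the spectral radius of $A_R$ is less than $1$; your phrase ``once it is phrased through $A_R$'' treats as a rephrasing what is in fact a typo inherited from sources (such as \cite{Rugh}) that write the system as $x(t+1)=Ax(t)$ rather than $\Delta x(t)=Ax(t)$. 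With that correction stated, the proof stands as written.
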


%%%%%%%%%%%%%%%%%%%%%%%%%%%%%%%%%%%%%%%%%%%%%%%%%%%%%%%%%%%%
\subsection{The Discrete-time Difference Lyapunov Equation}
%%%%%%%%%%%%%%%%%%%%%%%%%%%%%%%%%%%%%%%%%%%%%%%%%%%%%%%%%%%%
Again, if we start with a Lyapunov candidate of the form $V(x(t))=x^T(t) P(t) x(t)$, we obtain
    \begin{align*}
        \Delta V(x(t))&= V(x(t+1))-V(x(t))\\
            &= x^T(t+1)P(t+1)x(t+1)- x^T(t)P(t)x(t)\\
            &= x^T(t)(A(t)+I)^T P(t+1)(A(t)+I)x(t) -
                x^T(t)P(t)x(t)\\
            &= x^T(t) [(A(t)+I)^T P(t+1)(A(t)+I) - P(t)] x(t)\\
            &= x^T(t) [ A(t)P(t+1) + P(t+1)A(t) + A(t)P(t+1)A(t) + \Delta P(t) ]
            x(t).
    \end{align*}
If the central quantity satisfies
    $$
        A(t)P(t+1)+P(t+1)A(t)+A(t)P(t+1)A(t) + \Delta P(t)<0,
    $$
then $\Delta V(x)<0$.  Therefore,
we seek a $P(t)\in\S^+_n$ satisfying the \textit{discrete-time difference Lyapunov equation},
    \begin{equation}\label{DDLE}
        A(t)P(t+1) + P(t+1)A(t) + A(t)P(t+1)A(t) + \Delta P(t)=-M(t),\tag{DDLE}
    \end{equation}
for a given $M(t)\in\S^+_n$. This equation is more often seen in a
recursive form
    \begin{equation*}%\label{DDLEr}
         A_R^T(t) P(t+1) A_R(t) - P(t)=-M(t). \tag{DDLEr}
    \end{equation*}
To show stability of system \eqref{DLS} via Theorem~\ref{LyapZ}, we
begin by solving \eqref{DDLE} for $P(t)$.

\begin{theorem}\textup{\cite{Va}}
The unique solution of \eqref{DDLE} satisfying $P(t_0)=P_0$ is given by
    \begin{equation}\label{DDLEsol}
    P(t)=(\Phi_A^T(t,t_0))^{-1}P(t_0)(\Phi_A(t,t_0))^{-1}-\sum^t_{s=t_0}\Phi_A^T(s,t)M(s)\Phi_A(s,t),
    \end{equation}
where $\Phi_A(t,t_0)$ is the transition matrix for \eqref{DLS},
i.e., $\Phi_A(t,t_0)$ solves
$$ \Delta X(t) = A(t)X(t), \qquad X(t_0)=I.$$ Moreover, $P(t)\in\S_n^+$ whenever $M(t)\in\S_n^+$.
\end{theorem}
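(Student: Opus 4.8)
The plan is to solve \eqref{DDLE} by working with its equivalent recursive form $A_R^T(t)\,P(t+1)\,A_R(t) - P(t) = -M(t)$, eliminating the transition matrix by a congruence change of variables, solving the resulting elementary difference equation by summation, transforming back to recover \eqref{DDLEsol}, and finally reading off uniqueness and the definiteness assertion directly from the structure of that formula.

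First I would record the standard properties of the transition matrix $\Phi_A(t,t_0)$ of \eqref{DLS}. Writing $\Delta X(t)=A(t)X(t)$ as $X(t+1)=A_R(t)X(t)$ gives $\Phi_A(t+1,t_0)=A_R(t)\Phi_A(t,t_0)$, and --- under the standing assumption, implicit in the appearance of $(\Phi_A(t,t_0))^{-1}$ in \eqref{DDLEsol}, that $A_R(t)=I+A(t)$ is invertible for every $t$ --- the matrix $\Phi_A(t,t_0)$ is invertible with $(\Phi_A(t,t_0))^{-1}=\Phi_A(t_0,t)$, and the composition law $\Phi_A(t_2,t_1)\Phi_A(t_1,t_0)=\Phi_A(t_2,t_0)$ holds for arbitrary values of the arguments; in particular $\Phi_A(s,t_0)(\Phi_A(t,t_0))^{-1}=\Phi_A(s,t)$.

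The main step is the substitution $Q(t):=\Phi_A^T(t,t_0)\,P(t)\,\Phi_A(t,t_0)$. Using $\Phi_A(t+1,t_0)=A_R(t)\Phi_A(t,t_0)$ one gets $Q(t+1)=\Phi_A^T(t,t_0)\bigl[A_R^T(t)P(t+1)A_R(t)\bigr]\Phi_A(t,t_0)$, and by the recursive form of \eqref{DDLE} the bracketed quantity equals $P(t)-M(t)$, so $\Delta Q(t)=-\Phi_A^T(t,t_0)\,M(t)\,\Phi_A(t,t_0)$. Since $Q(t_0)=P(t_0)=P_0$, summing this telescoping relation writes $Q(t)$ as $P_0$ minus a partial sum of the terms $\Phi_A^T(s,t_0)M(s)\Phi_A(s,t_0)$; undoing the substitution via $P(t)=(\Phi_A^T(t,t_0))^{-1}Q(t)(\Phi_A(t,t_0))^{-1}$ and collapsing $\Phi_A(s,t_0)(\Phi_A(t,t_0))^{-1}=\Phi_A(s,t)$ (and its transpose) inside each summand produces \eqref{DDLEsol}. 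The initial condition $P(t_0)=P_0$ is then immediate because the partial sum is empty at $t=t_0$; I would pay particular attention here to the exact summation endpoint, since that is the natural place for an off-by-one slip. (As an independent check one can instead substitute \eqref{DDLEsol} directly into the recursive form, using $\Delta_t\Phi_A(s,t)=-\Phi_A(s,t+1)A(t)$ and $\Phi_A(s,t+1)A_R(t)=\Phi_A(s,t)$, but the change of variables is cleaner.)

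For uniqueness, the recursive form of \eqref{DDLE} can be solved forward as $P(t+1)=(A_R^T(t))^{-1}\bigl[P(t)-M(t)\bigr](A_R(t))^{-1}$, so that the prescribed value $P(t_0)=P_0$ determines $P$ on all of $\Z$; equivalently, the difference of any two solutions satisfies the homogeneous equation with zero initial data and hence vanishes identically. Finally, for symmetry and definiteness, each summand $\Phi_A^T(s,t)M(s)\Phi_A(s,t)$ is a congruence transformation of $M(s)$ by the invertible matrix $\Phi_A(s,t)$, and $(\Phi_A^T(t,t_0))^{-1}P_0(\Phi_A(t,t_0))^{-1}$ is a congruence transformation of $P_0$; since congruence preserves symmetry and positive definiteness, each constituent of \eqref{DDLEsol} carries the expected sign, and the assertion $P(t)\in\S_n^+$ follows from the hypotheses $M(t)\in\S_n^+$ (and $P_0\in\S_n^+$) in the same way as for the continuous-time differential Lyapunov equation. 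I expect the main obstacle to be purely clerical --- keeping the transition-matrix identities, the transpose placements, and the summation limits mutually consistent --- rather than conceptual: once the congruence change of variables is in place, the remaining steps are routine.
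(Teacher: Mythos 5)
Your core derivation is sound and is, in discrete form, exactly the method the paper itself uses to prove the time-scale generalization (Theorem \ref{TSDLEthm}): conjugate by the transition matrix so that the left-hand side of the Lyapunov equation becomes $[\Phi_A^T(t,t_0)P(t)\Phi_A(t,t_0)]^\Delta$, integrate (here: telescope), and undo the substitution using the composition law $\Phi_A(s,t_0)\Phi_A(t,t_0)^{-1}=\Phi_A(s,t)$. The paper gives no separate proof of the discrete theorem (it cites Varga), so your argument is a legitimate stand-alone substitute, and your worry about the summation endpoint is well placed: the telescoping produces $\sum_{s=t_0}^{t-1}$, which agrees with the displayed upper limit $t$ in \eqref{DDLEsol} only under the paper's convention $\int_{t_0}^{t}f(s)\,\Delta s=\sum_{s=t_0}^{t-1}f(s)$ on $\Z$ (Table \ref{integrals}); read literally through $s=t$, the formula would contradict $P(t_0)=P_0$. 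The uniqueness argument by forward recursion is also fine.

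The genuine gap is in your treatment of the final clause. In \eqref{DDLEsol} the sum of congruences of the $M(s)$ enters with a \emph{minus} sign, so the solution is a positive definite matrix minus a (growing) sum of positive definite matrices; ``each constituent carries the expected sign'' therefore establishes nothing, since a difference of elements of $\S_n^+$ need not lie in $\S_n^+$. For generic $P_0$ positive definiteness in fact fails for large $t$. The paper is explicit about this for the time-scale analogue: positive definiteness is not apparent from \eqref{TSDLEsol} and is recovered only by the special initial condition $P_0=\sum_{s=t_0}^{\infty}\Phi_A^T(s,t_0)M(s)\Phi_A(s,t_0)$, under which the solution collapses to $P(t)=\sum_{s=t}^{\infty}\Phi_A^T(s,t)M(s)\Phi_A(s,t)$, a sum of positive definite congruences (Theorem \ref{TSDLEICthm} and the remarks following it). To close your proof you must either impose that choice of $P_0$ (and verify convergence of the tail sum, which needs a stability hypothesis on $A$) or flag that the ``Moreover'' clause cannot hold for arbitrary $P_0$ as the statement suggests.
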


\begin{remark}
From the derivation of \eqref{DALE}, we see that the constant
solution of \eqref{DALE} is in fact a steady state solution of \eqref{DDLE} provided the initial condition $P(t_0)$ is chosen to be this constant solution of \eqref{DALE}.
\end{remark}

%Given system \eqref{CLS}, the ability to write a closed form
%solution of \eqref{CDLE} allows us to generate a $P(t)$ for which
%$\dot{V}(x)<0$.  Furthermore, if $P(t)>0$ for all $t\in\T$  it
%follows, from the quadratic form of the Lyapunov function candidate,
%that $V(x)$ is also.  For such a $P(t)$, $V(x)=x^T P(t) x$ is a
%viable Lyapunov function, satisfying the requirements of Theorem
%\ref{LyapR} and establishing the stability of system \eqref{CLS}.

\eqref{DDLE} and its corresponding solution are most useful for
stability analysis of linear systems when the time dependent nature
of the equation is relevant. For example, \eqref{DDLE} is frequently
seen in the context of the analysis of discrete periodic systems
\cite{BiCoDe,Va,Va2}. However, when the time-dependent aspect is not
of interest (e.g., $A(t)\equiv A$ or $A$ slowly time varying), it
makes sense to simplify the problem (as we did in the continuous
case) to an algebraic problem by seeking steady state solutions of
\eqref{DDLE}.

%%%%%%%%%%%%%%%%%%%%%%%%%%%%%%%%%%%%%%%%%%%%%%%%%%%%%%%%%%%%%%%%
\section{A Unified Approach to Lyapunov Stability}\label{T}
%%%%%%%%%%%%%%%%%%%%%%%%%%%%%%%%%%%%%%%%%%%%%%%%%%%%%%%%%%%%%%%%
Now we turn our attention to generalizing the previous concepts from $\R$ and $\Z$ to more general time domains (time scales), e.g. nonuniform discrete sets or sets with a combination of discrete and continuous components. We begin with a brief overview of time scales and the prerequisite time scale calculus needed in order to examine generalized Lyapunov equations, their solutions, and properties of solutions in this framework.

\subsection{What Are Time Scales?}%%%%%%%%%%%%%%%%%%%%%%%%%%%%%%%%%%%%%%%%%%%%%%

The theory of time scales springs from the 1988 doctoral dissertation of Stefan Hilger \cite{Hi2} that resulted in his seminal paper \cite{Hi1}. These works aimed to unify various overarching concepts from the (sometimes disparate) theories of discrete and continuous dynamical systems \cite{MiHoLi}, but also to extend these theories to more general classes of dynamical systems. From there, time scales theory advanced fairly quickly, culminating in the excellent introductory text by Bohner and Peterson \cite{BoPe2} and the more advanced monograph \cite{BoPe1}. A succinct survey on time scales can be found in \cite{AgBoORPe}.

\begin{table}
\caption{Canonical time scales compared to the general case.}
\label{comparisons}
\renewcommand{\arraystretch}{1.25}
%\begin{tabular}{|m{1.5cm}|c|c|c|}
\begin{tabular}{|>{\centering}m{1.75cm}|>{\centering}m{1.5in}|>{\centering}%
  m{1.5in}|>{\centering\arraybackslash}m{1.5in}|}
  \hline
  % after \\: \hline or \cline{col1-col2} \cline{col3-col4} ...
   & \sf{continuous} & \sf{(uniform) discrete} & \sf{time scale}\\ \hline
  \sf{domain} & $\R$ & $\Z$ & $\T$ \\ \hline
  & \rule{0ex}{3em}\includegraphics[scale=.35]{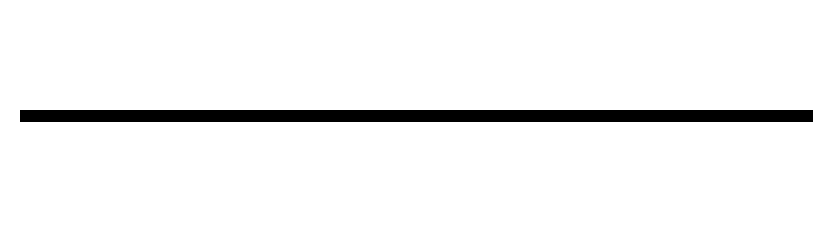} & \includegraphics[scale=.35]{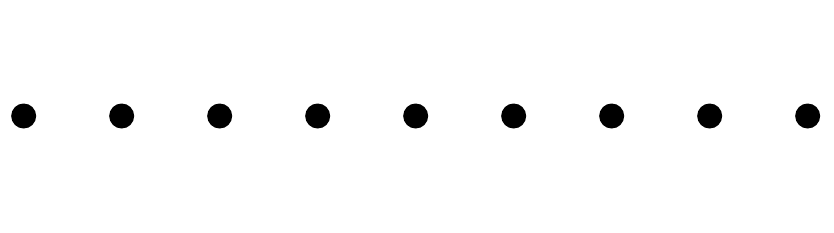} & \includegraphics[scale=.35]{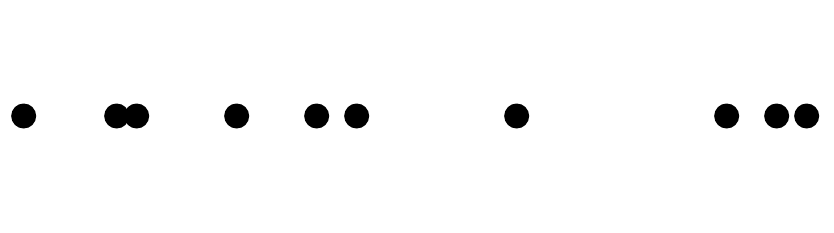}\\ \hline
  \sf{forward jump} & $\sigma(t)\equiv t$ & $\sigma(t)\equiv t+1$ & $\sigma(t)$ varies\\ \hline
  \sf{step size} & $\mu(t)\equiv 0$ & $\mu(t)\equiv 1$ & $\mu(t)$ varies\\ \hline
  \sf{differential operator} & \footnotesize{$\displaystyle\dot{x}(t):=\lim_{h\to 0}{x(t+h)-x(t)\over h}$} & \footnotesize{$\Delta x(t):=x(t+1)-x(t)$} & \footnotesize{$\displaystyle{x^\Delta(t):={x(t+\mu(t))-x(t)\over \mu(t)}}$}\\ \hline
%  $\begin{matrix} \dot{x}=ax,\ x(t_0)=1\\ \Downarrow\\ x(t)=e^{a(t-t_0)}\end{matrix}$
%  & $\begin{matrix} \Delta x=ax,\ x(t_0)=1\\ \Downarrow\\ x(t)=a^{t-t_0}\end{matrix}$
%  & $\begin{matrix} x^\Delta=ax,\ x(t_0)=1\\ \Downarrow\\ x(t)=e_a(t,t_0)\end{matrix}$ \\ \hline
%  $\L\{f\}(z)=\int_0^\infty f(t)e^{-zt}\,dt$
%  & $Z\{f\}(z)=\sum_{t=0}^\infty f(t)z^{-t}$
%  & $\L_\T\{f\}(z)=\int_0^\infty f(t)e_{\ominus z}(\sigma(t),0)\,\Delta t$\\ \hline
%  $\mathscr{F}\{f\}(\omega)=\int_\R f(t)e^{-j2\pi \omega t}\,dt$
%  & $\mathscr{F}\{f\}(\omega)=\sum_\Z f(t)e^{-j2\pi \omega t}$
%  & $\mathscr{F}_\T\{f\}(\omega)=\int_\T f(t)e_{\ominus\io 2\pi\omega}(t,0)\,\Delta t$\\ \hline
  \sf{canonical equation} & $\dot{x}(t)=Ax(t)$ & $\Delta x(t)=Ax(t)$ & $x^\Delta(t)=Ax(t)$\\ \hline
  \sf{LTI stability region in $\C$} & \rule{0ex}{9em}\includegraphics[scale=.35]{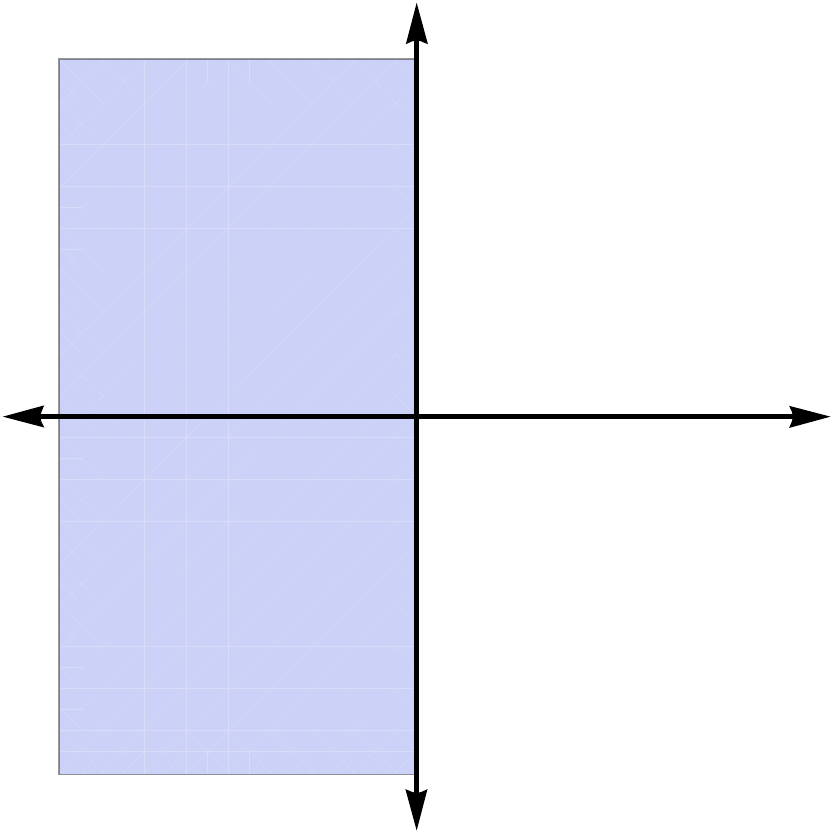}
 & \rule{0ex}{9em}\includegraphics[scale=.35]{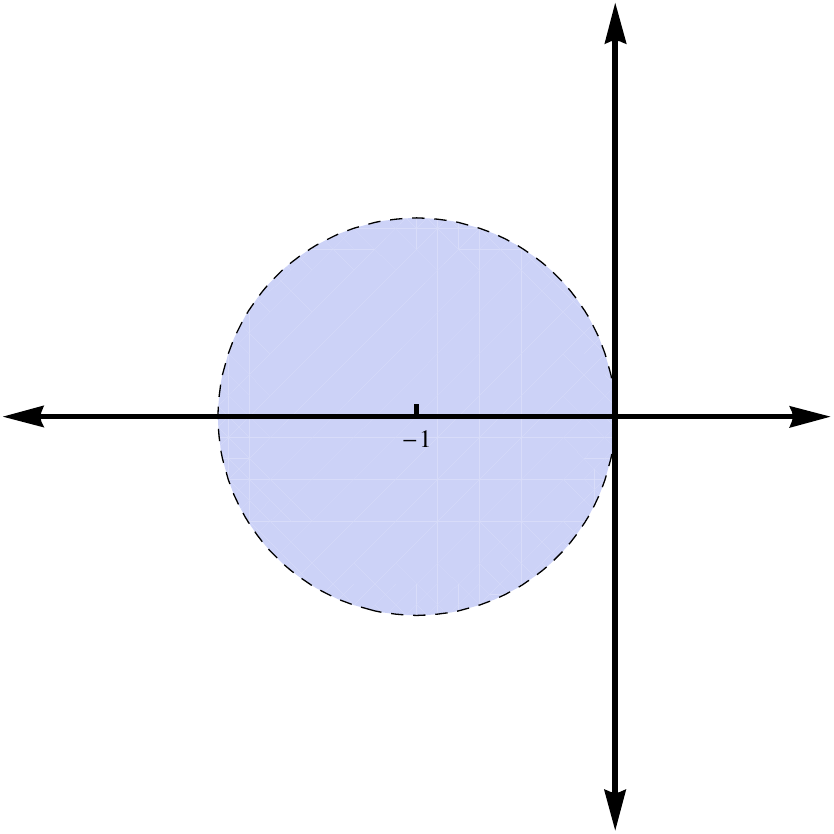} & \rule{0ex}{9em}\includegraphics[scale=.35]{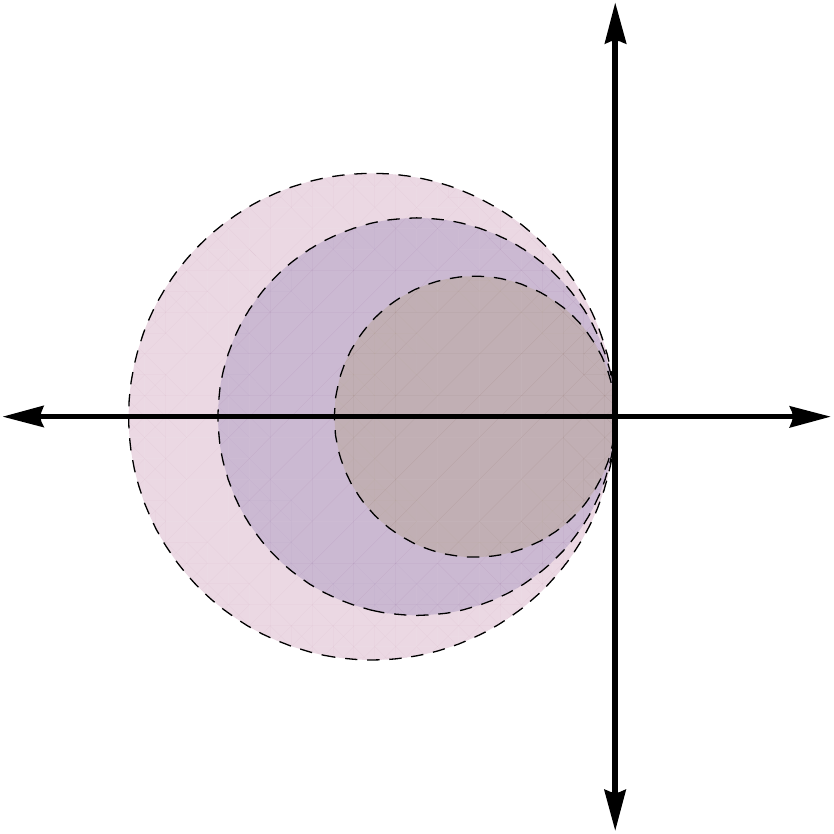}\\ \hline
\end{tabular}
\end{table}

A {\it time scale} $\T$ is any nonempty, (topologically) closed subset of the real numbers $\R$. Thus time scales can be (but are not limited to) any of the usual integer subsets (e.g. $\Z$ or $\N$), the entire real line $\R$, or any combination of discrete points unioned with closed intervals. For example, if $q>1$ is fixed, the {\it quantum time scale} $\overline{q^\Z}$ is defined as
    $$
    \overline{q^\Z}:=\{q^k:k\in\Z\}\cup\{0\}.
    $$
The quantum time scale appears throughout the mathematical physics literature, where the dynamical systems of interest are the $q$-difference equations \cite{Bo,Ca,ChKa}. Another interesting example is the {\it pulse time scale} $\mathbb{P}_{a,b}$ formed by a union of closed intervals each of length $a$ and gap $b$:
    $$
    \mathbb{P}_{a,b}:=\bigcup_k \left[k(a+b),k(a+b)+a\right].
    $$
This time scale is used to study duty cycles of various waveforms. Other examples of interesting time scales include any collection of discrete points sampled from a probability distribution, any sequence of partial sums from a series with positive terms, or even the infamous Cantor set.

The bulk of engineering systems theory to date rests on two time scales, $\R$ and $\Z$ (or more generally $h\Z$, meaning discrete points separated by distance $h$). However, there are occasions when necessity or convenience dictates the use of an alternate time scale. The question of how to approach the study of dynamical systems on time scales then becomes relevant, and in fact the majority of research on time scales so far has focused on expanding and generalizing the vast suite of tools available to the differential and difference equation theorist. We now briefly outline the portions of the time scales theory that are needed for this paper to be as self-contained as is practically possible.

\subsection{The Time Scales Calculus} We now review the time scales calculus needed for the remainder of the paper.

The {\it forward jump operator} is given by $\s(t):=\inf_{s\in\T}\{s>t\}$, while the {\it backward jump operator} is $\rho(t):=\sup_{s\in\T}\{s<t\}$. The {\it graininess function} $\m(t)$ is given by $\m(t):=\s(t)-t$.

%If $f:\T\to\R$ is a function, then the composition $f(\sigma(t))$ is often denoted by $f^\sigma(t)$.

A point $t\in\T$ is {\it right-scattered\/} if $\s(t)>t$ and {\it right dense\/} if $\s(t)=t$. A point $t\in\T$ is {\it left-scattered\/} if $\rho(t)<t$ and {\it left dense\/} if $\rho(t)=t$. If $t$ is both left-scattered and right-scattered, we say $t$ is {\it isolated} or {\it discrete}. If $t$ is both left-dense and right-dense, we say $t$ is {\it dense}. The set $\T^\kappa$ is defined as follows: if $\T$ has a left-scattered maximum $m$, then $\T^\kappa=\T-\{m\}$; otherwise, $\T^\kappa=\T$.

For $f:\T\to\R$ and $t\in\T^\kappa$, define $f^\D(t)$ as the number (when it exists), with the property that, for any $\varepsilon > 0$, there exists a neighborhood $U$ of $t$ such that
	\begin{equation}\label{epsdef}
	\left|[f(\sigma(t))-f(s)]-f^\D(t)[\sigma(t)-s]\right|
	\leq\epsilon|\sigma(t)-s|, \quad \forall s\in U.
	\end{equation}
The function $f^\D:\T^\kappa\to\R$ is called the \textit{delta derivative} or the {\it Hilger derivative} of $f$ on $\T^\kappa$. Equivalently, \eqref{epsdef} can be restated to define the $\Delta$-differential operator as
    $$
    x^\Delta(t):={x(\s(t))-x(t)\over \mu(t)},
    $$
where the quotient is taken in the sense that $\m(t)\to 0^+$ when $\m(t)=0$.

\begin{table}
\caption{Differential operators on time scales.}
\label{derivatives}
\renewcommand{\arraystretch}{2}
\begin{tabular}{|c|c|c|}
  \hline
  % after \\: \hline or \cline{col1-col2} \cline{col3-col4} ...
  \sf{time scale} & \sf{differential operator} & \sf{notes}\\ \hline
  $\T$ & $x^\Delta (t)={x(\s(t))-x(t)\over \m(t)}$ & generalized derivative\\ \hline
  $\R$ & $x^\Delta (t)=\lim_{h\to 0}{x(t+h)-x(t)\over h}$ & standard derivative\\ \hline
  $\Z$ & $x^\Delta(t)=\Delta x(t):=x(t+1)-x(t)$ & forward difference\\ \hline
  $h\Z$ & $x^\Delta(t)=\Delta_h x(t):={x(t+h)-x(t)\over h}$ & $h$-forward difference\\ \hline
  $\overline{q^\Z}$ & $x^\Delta(t)=\Delta_q x(t):={x(qt)-x(t)\over (q-1)t}$ & $q$-difference\\ \hline
  ${\mathbb P}_{a,b}$ & $x^\Delta(t)=\begin{cases} {dx\over dt}, & \s(t)=t,\\ {x(t+b)-x(t)\over b}, &\s(t)>t\end{cases}$ & pulse derivative\\ \hline
\end{tabular}
\end{table}

A benefit of this general approach is that the realms of differential equations and difference equations can now be viewed as but special, particular cases of more general {\it dynamic equations on time scales}, i.e. equations involving the delta derivative(s) of some unknown function. See Table~\ref{derivatives}.

\begin{table}
\caption{Integral operators on time scales.}
\label{integrals}
\renewcommand{\arraystretch}{2}
\begin{tabular}{|c|c|c|}
  \hline
  % after \\: \hline or \cline{col1-col2} \cline{col3-col4} ...
  \sf{time scale} & \sf{integral operator} & \sf{notes}\\ \hline
  $\T$ & $\int_\T f(t)\Delta t$ & generalized integral\\ \hline
  $\R$ & $\int_a^b f(t)\Delta t =\int_a^b f(t)\,dt$ & standard Lebesgue integral\\ \hline
  $\Z$ & $\int_a^b f(t)\Delta t =\sum_{t=a}^{b-1} f(t)$ & summation operator\\ \hline
  $h\Z$ & $\int_a^b f(t)\Delta t =\sum_{t=a}^{b-h} f(t)h$ & $h$-summation \\ \hline
  $\overline{q^\Z}$ & $\int_a^b f(t)\Delta t =\sum_{t=a}^{b/q} {f(t)\over (q-1)t}$ & $q$-summation\\ \hline
  %${\mathbb P}_{a,b}$ & $\begin{cases} \int_a^b f(t)\,dt, & \s(t)=t,\\ b f(t), &\s(t)>t\end{cases}$ & pulse integral\\ \hline
\end{tabular}
\end{table}

Since the graininess function induces a measure on $\T$, if we consider the Lebesgue integral over $\T$ with respect to the $\mu$-induced measure,
    $$
    \int_{\T}f(t)\,d\mu(t),
    $$
then all of the standard results from measure theory are available \cite{Gu}. In particular, under mild technical assumptions on the integrand, we obtain the set of integral operators in Table~\ref{integrals}.

The upshot here is that the derivative and integral concepts (and all of the concepts in Table~\ref{comparisons}) apply just as readily to {\it any} closed subset of the real line as they do on $\R$ or $\Z$. Our goal to leverage this general framework against wide classes of dynamical and control systems. Progress in this direction has been made in transforms theory \cite{DaGrJaMaRa, MaGrDa}, control \cite{DaGrJaMa, GrDaDC, GrDaDaMa}, dynamic programming \cite{SeSaWu}, and biological models \cite{HoJa1, HoJa2}.

The function $p:\T\to\R$ is \textit{regressive} if $1+\mu(t)p(t)\neq
0$ for all $t\in\T^\kappa$.  We define the related sets
\begin{gather*}
    \mathcal{R}:=\{p:\T\to\R: p\in \textup{C}_\textup{rd}(\T)
    \text{ and }1+\mu(t)p(t)\not=0\  \text{ for all } t\in\T^\kappa\},\\
    \mathcal{R}^+:=\{p\in\mathcal{R}:1+\mu(t)p(t)>0\text{ for all } t\in\T^\kappa\}.
    \end{gather*}

For $p(t)\in\mathcal{R}$, we define the \textit{generalized time
scale exponential function} $e_p(t,t_0)$ as the unique solution to the initial value problem $x^\Delta(t)=p(t)x(t)$, $x(t_0)=1$, which exists when $p\in\mathcal{R}$. See \cite{BoPe1}.

Similarly, the unique solution to the matrix initial value problem $X^\Delta(t)=A(t)X(t)$, $X(t_0)=I$ is called the {\it transition matrix} associated with this system. This solution is denoted by $\Phi_A(t,t_0)$ and exists when $A\in\mathcal{R}$. A matrix is regressive if and only if all of its eigenvalues are in
$\mathcal{R}$.  Equivalently, the matrix $A(t)$ is regressive if and
only if $I+\mu(t)A(t)$ is invertible for all $t\in\T^\kappa$.

\subsection{Stability of Dynamic Systems on Time Scales}
%%%%%%%%%%%%%%%%%%%%%%%%%%%%%%%%%%%%%%%%%%%%%%%%%%%%%%%%%%%%%%%%%%%%%%%%%%%%%%%%%%%%%%%%%%%%%%%%%%%%%%%%%%%

Let $\T$ be a time scale,
unbounded above with bounded graininess. We consider
the dynamic linear system
    \begin{equation}\label{LS}
    x^{\Delta}(t)= A(t) x(t),
    \end{equation}
for $A(t)\in\mathcal{R}(\R^{n\times n})$, and $t\in\T$, where $x^\Delta$ is the
generalized $\Delta$-derivative. Notice that \eqref{LS} reduces to
the familiar systems in \eqref{CLS} and \eqref{DLS} when $\T=\R$ and
$\T=\Z$, respectively. Having seen how Lyapunov's Second Method
allowed us to analyze stability of these systems on the familiar
continuous and discrete domains, we would now like to apply this
method to the analysis of \eqref{LS} defined on an arbitrary time
scale. %By extending these methods for use on arbitrary time domains
%we will unify the results outlined in Sections \ref{R} and \ref{Z}
%for systems defined on $\R$ and $\Z$.

In 2003, P\"{o}tzsche, Siegmund, and
Wirth \cite{PoSiWi} developed spectral criteria for the exponential stability
of \eqref{LS} in the scalar case and in the case that $A(t)\equiv
A$. Then DaCunha \cite{Da2} extended these results by adapting the Second Method of Lyapunov to
the analysis of a certain class of nonautonomous linear systems
(slowly time varying systems) defined on time scales. Here we will
explore and further extend those results, ultimately developing and
solving a \textit{time scale dynamic Lyapunov equation} which
unifies the familiar Lyapunov equations on $\R$ and $\Z$ and is
applicable to a much broader class of systems than those DaCunha
studied.

We begin by giving generalized characterizations of stability for
dynamic linear systems on time scales and then review a few
necessary results from existing theory.

\begin{definition}
    For $t\in\T$, an equilibrium $x=0$ of \eqref{LS} is \textit{Lyapunov stable} or
    \textit{stable in the sense of Lyapunov} if, for every
    $\varepsilon>0$, there exists a $\delta=\delta(\varepsilon)>0$ such that
    if $||x(t_0)||<\delta$, then $||x(t)||<\varepsilon$, for every $t \geq
    t_0$.  An equilibrium $x=0$ of \eqref{LS} is called \textit{uniformly
    stable} if there exists a finite constant $\gamma>0$ such
    that for any $t_0$ and $x(t_0)$, the corresponding solution satisfies
        $$
        ||x(t)||\leq\gamma ||x(t_0)||,\qquad t\geq t_0.
        $$
\end{definition}

\begin{definition}\label{as_T}
    For $t\in\T$, an equilibrium $x=0$ of \eqref{DLS} is \textit{asymptotically
    stable} if it is Lyapunov stable and there exists a $\delta>0$ such
    that if $||x(t_0)||<\delta$, then $\lim_{t\rightarrow\infty} ||x(t)|| =
    0$. Furthermore, an equilibrium $x=0$ of \eqref{DLS} is \textit{uniformly
    asymptotically stable} if it is uniformly stable and if given any
    $\delta>0$ there exists a $T>0$ such that for any
    $t_0$ and $x(t_0)$ the corresponding solution satisfies
        $$
        ||x(t)||\leq \delta ||x(t_0)||,\qquad t\geq t_0+T.
        $$
\end{definition}

\begin{definition}
    For $t\in\T$, an equilibrium $x=0$ of \eqref{DLS} is \textit{exponentially
    stable} if it is asymptotically stable and there exist constants
    $\gamma,\,\lambda,\,\delta>0$ with $-\lambda\in\mathcal{R}^+ $ such that
    if $||x(t_0)||<\delta$, then
    $$
    ||x(t)||\leq\gamma e_{-\lambda}(t, t_0)||x(t_0)||,\qquad t \geq t_0.
    $$
    Furthermore, $x=0$ of \eqref{DLS} is \textit{uniformly
    exponentially stable} if there exist $\gamma,\,\lambda>0$
    with $-\lambda\in\mathcal{R}^+$ such that for any $t_0\text{ and
    }x(t_0)$, the corresponding solution satisfies
    $$
    ||x(t)||\leq||x(t_0)||\gamma
    e_{-\lambda}(t,t_0),\qquad t\geq t_0.
    $$
\end{definition}

These characterizations of stability for system \eqref{LS} are
generalizations of the corresponding characterizations of stability
for systems defined on $\R$ and $\Z$. Specifically, the condition
that $-\lambda\in\mathcal{R}^+$ in the characterization of uniform
exponential stability reduces to $\lambda>0$ and $0 < \lambda < 1$
for $\T=\R$ and $\T=\Z$, respectively.

A necessary and sufficient condition for the
stability of \eqref{LS} (in the scalar case) is given via the
following theorem.
\begin{theorem}\textup{\cite{PoSiWi}}\label{PSW1}
Let $\T$ be a time scale which is unbounded above and let $\lambda
\in \C$.  Then the scalar equation
    $$
    x^{\Delta}(t)=\lambda x(t), \quad x(t_0)=x_0,
    $$
is exponentially stable if and
only if one of the following conditions is satisfied for arbitrary
$t_0 \in \T$:
\begin{itemize}
\item[\textup{(i)}] $\gamma(\lambda):= \displaystyle \limsup_{T \to
\infty}\frac{1}{T-t_0}\int_{t_0}^{T}\displaystyle\lim_{s \searrow
\mu(t)} \frac{\log|1+s\lambda|}{s}\Delta t <0$,
\item[\textup{(ii)}] For every
$T \in \T$, there exists a $t \in \T$ with $t>T$ such that
$1+\mu(t)\lambda=0$,
\end{itemize}
where we use the convention $\log 0=-\infty$ in \textup{(i)}.
\end{theorem}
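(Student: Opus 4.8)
The plan is to reduce everything to the explicit formula for the magnitude of the scalar exponential function and then read off the two conditions. Since \eqref{LS} is here scalar and autonomous, its solution is $x(t)=x_0\,e_\lambda(t,t_0)$ as long as $\lambda$ is regressive, and by linearity exponential stability is equivalent to the single estimate $|e_\lambda(t,t_0)|\le\gamma\,e_{-\alpha}(t,t_0)$ for all $t\ge t_0$ and some $\gamma,\alpha>0$ with $-\alpha\in\mathcal{R}^+$: the asymptotic- and Lyapunov-stability clauses of the definition come for free once this bound holds, because $e_{-\alpha}(t,t_0)\le e^{-\alpha(t-t_0)}\le1$. I would also observe at the outset, via the semigroup identity $e_\lambda(t,t_0)=e_\lambda(t,t_0')\,e_\lambda(t_0',t_0)$, that both this property and the number $\gamma(\lambda)$ in (i) are independent of the base point $t_0$, so we may slide $t_0$ to the right whenever convenient.

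First I would dispose of the singular case. If (ii) holds, there are right-scattered points $t_n\to\infty$ with $1+\mu(t_n)\lambda=0$; taking the first such point past $t_0$, the recursion $x(\sigma(t_n))=(1+\mu(t_n)\lambda)x(t_n)=0$ forces $x\equiv 0$ from $\sigma(t_n)$ on, and on the compact initial stretch $[t_0,\sigma(t_n)]\cap\T$ the solution is bounded, so the exponential estimate holds trivially; hence (ii)$\Rightarrow$ exponential stability. For the rest I may therefore assume (ii) fails, i.e.\ there is a $T^*$ with $1+\mu(t)\lambda\ne0$ for all $t>T^*$; sliding $t_0$ past $T^*$ makes $\lambda$ regressive on $[t_0,\infty)\cap\T$, and then
\[
|e_\lambda(t,t_0)|=\exp\!\left(\int_{t_0}^{t}g(\tau)\,\Delta\tau\right),\qquad g(\tau):=\lim_{s\searrow\mu(\tau)}\frac{\log|1+s\lambda|}{s}=\Re\!\left(\tfrac{1}{\mu(\tau)}\Log(1+\mu(\tau)\lambda)\right),
\]
which is precisely the integrand in (i). (Note $g(\tau)\le|\lambda|$ always, since $\log y\le y-1$, so the integral is well defined.) After this reduction the claim becomes: $|e_\lambda(t,t_0)|$ is dominated by a time-scale exponential $e_{-\alpha}$ if and only if $\limsup_{T\to\infty}\frac{1}{T-t_0}\int_{t_0}^{T}g\,\Delta\tau<0$.

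The two implications then follow from elementary inequalities for the cylinder value $\xi_h(-\alpha):=\tfrac1h\log(1-\alpha h)$ (read as $-\alpha$ at $h=0$). For the forward direction, from $\log(1-u)\le -u$ one gets $\xi_h(-\alpha)\le-\alpha$ for every admissible $h$, hence $e_{-\alpha}(t,t_0)\le e^{-\alpha(t-t_0)}$; so the estimate $|e_\lambda(t,t_0)|\le\gamma e_{-\alpha}(t,t_0)$ yields $\int_{t_0}^{t}g\le\log\gamma-\alpha(t-t_0)$, and dividing by $t-t_0$ and taking $\limsup$ gives $\gamma(\lambda)\le-\alpha<0$, which is (i). For the converse, if $\gamma(\lambda)<0$ set $\epsilon:=-\tfrac12\gamma(\lambda)>0$; then $\int_{t_0}^{t}g\le-\epsilon(t-t_0)+C$ for all $t\ge t_0$ (the constant $C$ absorbing the bounded initial stretch, using $g\le|\lambda|$ there), so $|e_\lambda(t,t_0)|\le e^{C}e^{-\epsilon(t-t_0)}$. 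It remains to promote this classical exponential bound to one of the required form: using the ambient assumption that $\T$ has bounded graininess, $\mu\le\mu^*$, one has the two-sided estimate $-C_2(\alpha)\alpha\le\xi_h(-\alpha)\le-\alpha$ with $C_2(\alpha)=\dfrac{-\log(1-\mu^*\alpha)}{\mu^*\alpha}\to1$ as $\alpha\to0^+$; choosing $\alpha>0$ with $\alpha<1/\mu^*$ (so $-\alpha\in\mathcal{R}^+$) and $C_2(\alpha)\alpha\le\epsilon$ gives $e_{-\alpha}(t,t_0)\ge e^{-\epsilon(t-t_0)}$, whence $|e_\lambda(t,t_0)|\le e^{C}e_{-\alpha}(t,t_0)$ and exponential stability follows.

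I expect the main obstacle to be exactly this last promotion step: matching a classical exponential rate against the time-scale exponential $e_{-\alpha}$ demanded by the definition. This is where bounded graininess is genuinely needed—for large $\mu$ no positive $\alpha$ even lies in $\mathcal{R}^+$, and $e_{-\alpha}$ can decay too fast—and the delicate point is that the distortion constant $C_2(\alpha)$ itself depends on $\alpha$, so one must check $C_2(\alpha)\alpha\to0$ to be sure an admissible $\alpha$ exists. By contrast, the bookkeeping establishing the $t_0$-independence of $\gamma(\lambda)$ and separating the singular case (ii) from the regressive case is routine.
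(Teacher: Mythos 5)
The paper does not prove Theorem~\ref{PSW1}; it is quoted directly from P\"otzsche, Siegmund, and Wirth \cite{PoSiWi}, so there is no in-paper argument to compare yours against. Judged on its own terms, your proof follows the standard route of that reference and is sound in its essentials: the reduction to a single estimate on $|e_\lambda(t,t_0)|$, the dispatch of the singular case (ii), the identity $|e_\lambda(t,t_0)|=\exp\bigl(\int_{t_0}^t \lim_{s\searrow\mu(\tau)}\tfrac{\log|1+s\lambda|}{s}\,\Delta\tau\bigr)$ for regressive $\lambda$, and the two elementary comparisons between the running integral of that integrand and an affine upper envelope are all correct. You are also right to isolate the conversion of a classical decay rate into a bound by $e_{-\alpha}(t,t_0)$ as the one genuinely delicate step.

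Two caveats. First, that ``promotion'' step is an artifact of the definition of exponential stability adopted in this paper (decay measured against $e_{-\alpha}(t,t_0)$ with $-\alpha\in\mathcal{R}^+$); in \cite{PoSiWi} decay is measured against the ordinary exponential $e^{-\alpha(t-t_0)}$, for which your bound $|e_\lambda(t,t_0)|\le e^{C}e^{-\epsilon(t-t_0)}$ already finishes the converse and no graininess bound is needed. Since the theorem as stated assumes only that $\T$ is unbounded above, you should say explicitly that you are proving it under the section's standing hypothesis of bounded graininess and the $e_{-\alpha}$-based definition; your own computation that $C_2(\alpha)\alpha\to 0$ is exactly what shows an admissible $\alpha$ exists, and it genuinely uses $\mu\le\mu^*$. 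Second, the opening claim that exponential stability and $\gamma(\lambda)$ are independent of $t_0$ ``via the semigroup identity'' fails precisely in the borderline situation you must then handle: if $1+\mu(t^*)\lambda=0$ at finitely many points, $e_\lambda(\cdot,t_0)$ is not invertible across $t^*$, the solution launched before $t^*$ dies while one launched after need not, and $\gamma(\lambda)$ jumps from $-\infty$ (via the $\log 0=-\infty$ convention) to a possibly nonnegative value. The biconditional still holds for each fixed $t_0$ separately, which is all that is asserted, but that case should be routed through the convention in (i) rather than through a base-point-independence argument.
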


They then define the {\em set of exponential stability} accordingly, as the collection of
$\lambda\in\C$ satisfying condition (i) or (ii) above.

\begin{definition}\textup{\cite{PoSiWi}}
Given a time scale $\T$ which is unbounded above, define for
arbitrary $t_0 \in \T$,
    $$
    \mathcal{S}_{\C}(\T):=\left\{\lambda \in \C:
    \limsup_{T \to \infty}\frac{1}{T-t_0}\int_{t_0}^{T}\lim_{s \searrow
    \mu(t)} \frac{\log|1+s\lambda|}{s}\Delta t <0\right\}.
    $$
and
    $$
    \mathcal{S}_{\R}(\T):=\{\lambda \in \R: \forall\, T \in \T\ \exists\,
    t \in \T \, \, {\rm with}\, \,  t>T\, \,  {\rm such\, \, that} \, \,
    1+\mu(t)\lambda=0\}.
    $$
Then the {\em set of exponential stability} for the time scale $\T$ is defined by
$$
\mathcal{S}(\T):=\mathcal{S}_{\C}(\T) \cup \mathcal{S}_{\R}(\T).
$$
\end{definition}

Theorem~\ref{PSW1} extends to the time invariant matrix case, $A(t)\equiv A$ as follows.

\begin{theorem}\textup{\cite{PoSiWi}}
Let $\T$ be a time scale that is unbounded above and let $A \in
\R^{n \times n}$ be regressive.  Then the following hold:
\begin{itemize}
\item[\textup{(i)}] If the system \eqref{LS} is exponentially stable,
then $\spec(A)\subset \mathcal{S}_{\C}(\T)$.
\item[\textup{(ii)}] If all
eigenvalues $\lambda$ of $A$ are uniformly regressive, $($i.e.,
$\exists \gamma>0$ such that $\gamma^{-1}\geq |1+\mu(t)\lambda(t)|$,
$t\in\T$$)$ and if $\spec(A) \subset \mathcal{S}_{\C}(\T)$, then
\eqref{LS} is exponentially stable.
\end{itemize}
\end{theorem}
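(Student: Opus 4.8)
The plan is to reduce both parts to the scalar criterion of Theorem~\ref{PSW1} by decomposing $\C^n$ along the spectrum of the \emph{constant} matrix $A$. For part~(i), I would start from the hypothesis that \eqref{LS} is exponentially stable, so $\|\Phi_A(t,t_0)\|\le\gamma\,e_{-\lambda}(t,t_0)$ for some $\gamma,\lambda>0$ with $-\lambda\in\mathcal{R}^+$. Fixing $\lambda_0\in\spec(A)$ together with a (possibly complex) eigenvector $v\neq 0$, I would note that $X(t)=e_{\lambda_0}(t,t_0)v$ solves $X^\Delta=AX$, $X(t_0)=v$, so uniqueness forces $\Phi_A(t,t_0)v=e_{\lambda_0}(t,t_0)v$ and hence $|e_{\lambda_0}(t,t_0)|\le\gamma\,e_{-\lambda}(t,t_0)$; that is, the scalar equation $z^\Delta=\lambda_0 z$ is exponentially stable. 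Theorem~\ref{PSW1} then places $\lambda_0$ in $\mathcal{S}_\C(\T)\cup\mathcal{S}_\R(\T)$, and I would rule out the second alternative: $\lambda_0\in\mathcal{S}_\R(\T)$ would require $1+\mu(t)\lambda_0=0$ for arbitrarily large $t$, contradicting regressivity of $A$ (equivalently, invertibility of $I+\mu(t)A$, i.e.\ $\prod_{\lambda\in\spec A}(1+\mu(t)\lambda)\neq 0$ for all $t$). Since $\lambda_0$ was an arbitrary eigenvalue, this gives $\spec(A)\subset\mathcal{S}_\C(\T)$.

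For part~(ii), I would conjugate $A$ to complex Jordan form, $A=P^{-1}JP$, so that $\Phi_A(t,t_0)=P^{-1}\Phi_J(t,t_0)P$ and the exponential-stability estimate is preserved under this conjugation up to the fixed constant $\|P\|\,\|P^{-1}\|$; it then suffices to bound the transition matrix of a single Jordan block $\lambda_k I+N$ with $N$ nilpotent of size $m$. First I would rescale by $D_\varepsilon=\operatorname{diag}(1,\varepsilon,\dots,\varepsilon^{m-1})$ to replace the nilpotent part by $\varepsilon N$, at the cost of the fixed factor $\varepsilon^{-(m-1)}$. Then a short variation-of-constants computation---writing a solution of $X^\Delta=(\lambda_k I+\varepsilon N)X$ as $X=e_{\lambda_k}Y$ and using the time-scale product rule---shows that $Y$ solves $Y^\Delta=B(t)Y$ with $B(t)=\dfrac{\varepsilon}{1+\mu(t)\lambda_k}N$, so $\Phi_{\lambda_k I+\varepsilon N}(t,t_0)=e_{\lambda_k}(t,t_0)\,\Phi_B(t,t_0)$. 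Here the two hypotheses earn their keep: bounded graininess and uniform regressivity keep $|1+\mu(t)\lambda_k|$ uniformly bounded, both above and away from $0$, so that $\|B(t)\|\le\varepsilon C_0$ uniformly in $t$ and the time-scale Gronwall inequality gives $\|\Phi_B(t,t_0)\|\le e_{\varepsilon C_0}(t,t_0)\le e^{\varepsilon C_0(t-t_0)}$ (using $h^{-1}\log(1+hc)\le c$).

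To finish I would combine the estimates. From $\spec(A)\subset\mathcal{S}_\C(\T)$ each $\gamma(\lambda_k)<0$, so there are $\alpha>0$ and constants $C_k$ with $|e_{\lambda_k}(t,t_0)|=\exp\!\big(\int_{t_0}^{t}\lim_{s\searrow\mu(\tau)}\tfrac{\log|1+s\lambda_k|}{s}\,\Delta\tau\big)\le C_k\,e^{-\alpha(t-t_0)}$ (the $\limsup$ condition controls large $t$, and boundedness of the integrand handles the fixed initial stretch); choosing $\varepsilon<\alpha/C_0$ yields $\|\Phi_{\lambda_k I+N}(t,t_0)\|\le C_k'\,e^{-(\alpha-\varepsilon C_0)(t-t_0)}$, and taking the smallest decay rate over the finitely many blocks gives $\|\Phi_A(t,t_0)\|\le C\,e^{-\beta(t-t_0)}$ with $\beta>0$. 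The last step converts this ordinary exponential bound into the required time-scale form: with $\mu^{*}$ a bound on the graininess, I would pick $0<\lambda\le\min\{\beta/2,\,1/(2\mu^{*})\}$, observe $-\lambda\in\mathcal{R}^+$, and use that $h\mapsto h^{-1}\log(1-h\lambda)$ is decreasing on $(0,\mu^{*}]$ to get $e_{-\lambda}(t,t_0)\ge e^{-2\lambda(t-t_0)}\ge e^{-\beta(t-t_0)}$, whence $\|\Phi_A(t,t_0)\|\le C\,e_{-\lambda}(t,t_0)$. This is precisely the (uniform, hence plain) exponential stability of \eqref{LS}.

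I expect the main obstacle to be the non-diagonalizable case. When $A$ is diagonalizable the argument of part~(i) applied on each eigenvector already settles part~(ii) as well; the real work is showing that the nilpotent, ``polynomial-in-time'' contributions of the Jordan blocks cannot overwhelm the exponential decay furnished by membership in $\mathcal{S}_\C(\T)$. That is exactly where the bounded-graininess and uniform-regressivity hypotheses must be invoked---to keep the perturbation coefficient $B(t)$ uniformly small after the diagonal rescaling---and a secondary, purely bookkeeping nuisance is the passage between $e^{-\beta(t-t_0)}$ and $e_{-\lambda}(t,t_0)$ together with the check that $-\lambda\in\mathcal{R}^+$.
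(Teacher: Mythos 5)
The paper offers no proof of this theorem: it is imported verbatim from P\"otzsche--Siegmund--Wirth \cite{PoSiWi}, so there is no internal argument to compare yours against. Judged on its own, your reconstruction is essentially correct and in fact follows the same route as the cited source: part (i) by restricting $\Phi_A$ to an eigenvector so that $\Phi_A(t,t_0)v=e_{\lambda_0}(t,t_0)v$ and invoking Theorem~\ref{PSW1}, with the $\mathcal{S}_{\R}(\T)$ alternative excluded by regressivity; part (ii) by Jordan decomposition, the $D_\varepsilon$ rescaling of the nilpotent part, the factorization $\Phi_{\lambda_k I+\varepsilon N}=e_{\lambda_k}\Phi_B$ with $B(t)=\varepsilon(1+\mu(t)\lambda_k)^{-1}N$, and a Gronwall bound absorbed into the strict negativity of $\gamma(\lambda_k)$. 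You also correctly locate where the two extra hypotheses enter: bounded graininess and uniform regressivity are exactly what keep $\|B(t)\|$ uniformly of order $\varepsilon$, and bounded graininess is what permits the final passage between $e^{-\beta(t-t_0)}$ and $e_{-\lambda}(t,t_0)$ with $-\lambda\in\mathcal{R}^+$.

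Two minor remarks. First, you read uniform regressivity as a lower bound $|1+\mu(t)\lambda|\geq\gamma>0$; that is the correct (and necessary) reading, even though the parenthetical gloss in the statement as printed has the inequality in the direction $\gamma^{-1}\geq|1+\mu(t)\lambda(t)|$, which would be automatic under bounded graininess and hence vacuous---the printed version appears to be a transcription slip relative to \cite{PoSiWi}. Second, in part (i) the eigenvector $v$ may be complex, so to match the real-valued stability definition you should split $\Phi_A(t,t_0)v$ into real and imaginary parts (or note that the operator bound on $\Phi_A$ extends to its complexification); this is routine. Neither point is a genuine gap.
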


However, this theorem has limitations in practice as the set
$\mathcal{S}$ can be difficult to compute for an arbitrary time
scale. To overcome this, for each fixed $t\in\T$, define the (open) Hilger circle\footnote{More appropriately, the {\it Hilger disk}, but this abuse of language is established in the literature now.} via
    $$
    \H_{\mu(t)}:=\left\{z\in\C_\mu:\left|z+\frac{1}{\mu(t)}\right|<\frac{1}{\mu(t)}\right\}.
    $$
Hoffacker and Gard \cite{GaHo} showed that, if $0\le\mu(t)\le \mu_{\max}$ for all $t\in\T$, then there is a region $\H_{\min}\subset\mathcal{S}_{\C}$, corresponding to $\mu_{\max}$ and given by
    $$
    \H_{\min}:=\left\{z\in\C_{\mu_{\max}}:\left|z+\frac{1}{\mu_{\max}}\right|<\frac{1}{\mu_{\max}}\right\}.
    $$
This yields a static stability region that is more easily
calculable than $\mathcal{S}_{\C}$, albeit more conservative. We conclude that
$\spec{A}\subset\H_{\min}$ is a sufficient (but not necessary) condition for the stability of \eqref{LS} when $A(t)\equiv A$.

%For this reason, for each fixed $t$, we
%define the sets
%    \begin{align*}
%    \R^{n\times n}_{\H_{\mu(t)}}&:=\{A\in\R^{n\times n}: \spec(A)\subset\H_{\mu(t)}\},\\
%    \R^{n\times n}_{\H_{\min}}&:=\{A\in\R^{n\times n}: \spec(A)\subset\H_{\min}\}=\bigcap_{t\in\T} \R^{n\times n}_{\H_{\mu(t)}}.
%    \end{align*}

Having established the above results for the scalar and autonomous
cases, we turn our attention back to the general case. In order to
extend Lyapunov's Second Method to dynamic equations on time scales,
we define a time scale Lyapunov function and give a unifying
generalization of Theorems \ref{LyapR} and \ref{LyapZ}.
\begin{definition}\label{LyapfnT} A function
$V:\R^n\rightarrow\R$ is called a \textit{generalized} or
\textit{time scale Lyapunov function} for system \eqref{LS} if
    \begin{itemize}
    \item [(i)]$V(x)\geq 0$ with equality if and only if $x=0$, and
    \item [(ii)]$V^\Delta(x(t))\leq 0.$
    \end{itemize}
\end{definition}
\begin{theorem}[Lyapunov's Second Theorem on $\T$, \cite{HoTi,LaSiKa}]\label{LyapT}
    Given system \eqref{DLS} with equilibrium $x=0$, if there exists
    an associated Lyapunov function $V(x)$, then $x=0$ is Lyapunov stable.
    Furthermore, if $V^{\Delta}(x(t))< 0$, then $x=0$ is asymptotically stable.
\end{theorem}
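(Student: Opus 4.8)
The plan is to mirror the classical continuous-time argument, using the time scale calculus to handle the nonuniform steps, and to reduce the two cases (right-dense and right-scattered points) to the familiar $\R$- and $\Z$-type estimates. Throughout, $V$ is a time scale Lyapunov function in the sense of Definition~\ref{LyapfnT}, so $V \ge 0$ with $V(x)=0$ iff $x=0$, and $V^\Delta(x(t)) \le 0$ along solutions of \eqref{LS}. First I would record the key monotonicity consequence: since $V^\Delta(x(t)) \le 0$ for all $t \in \T^\kappa$, the function $t \mapsto V(x(t))$ is nonincreasing on $\T$. This is standard on time scales, but I would justify it by noting that at a right-dense point nonpositivity of the delta derivative gives the usual one-sided decrease, while at a right-scattered point $V(x(\sigma(t))) = V(x(t)) + \mu(t) V^\Delta(x(t)) \le V(x(t))$; patching these together over any interval $[t_0,t]_\T$ yields $V(x(t)) \le V(x(t_0))$.

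Next I would establish Lyapunov stability. Fix $\varepsilon > 0$. Because $V$ is continuous (it is a quadratic-type form, or at minimum continuous with $V(0)=0$) and positive definite, I can choose the standard pair of bounds: let $\alpha(\varepsilon) := \inf\{V(y) : \|y\| = \varepsilon\} > 0$ by positive definiteness and continuity on the sphere, and pick $\delta > 0$ small enough that $\|y\| < \delta$ implies $V(y) < \alpha(\varepsilon)$, again using continuity of $V$ at $0$. Now suppose $\|x(t_0)\| < \delta$. Then $V(x(t_0)) < \alpha(\varepsilon)$, and by the monotonicity step $V(x(t)) \le V(x(t_0)) < \alpha(\varepsilon)$ for all $t \ge t_0$. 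If we ever had $\|x(t)\| \ge \varepsilon$, then by continuity of the solution $t \mapsto x(t)$ in $t$ — or, in the discrete/scattered case, directly — we could locate a point where $\|x(t)\| = \varepsilon$, forcing $V(x(t)) \ge \alpha(\varepsilon)$, a contradiction. Hence $\|x(t)\| < \varepsilon$ for all $t \ge t_0$, which is Lyapunov stability. The one subtlety here is the intermediate-value step at right-scattered points, where the solution can jump; I would handle this by instead defining $\delta$ through the sublevel set $\{y : V(y) < \alpha(\varepsilon)\}$ and noting this set is forward invariant, so the trajectory can never reach $\|x\| = \varepsilon$ without first having $V(x) \ge \alpha(\varepsilon)$, regardless of jumps — the sublevel-set formulation sidesteps continuity entirely.

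For the asymptotic stability claim, assume in addition $V^\Delta(x(t)) < 0$ for $x \ne 0$. Stability is already proved, so it remains to show $\|x(t)\| \to 0$ for initial data in some $\delta$-ball. I would argue that $V(x(t))$ is nonincreasing and bounded below by $0$, hence converges to some limit $L \ge 0$; the goal is $L = 0$. Suppose $L > 0$. Then the trajectory stays in the annulus $\{y : L \le V(y) \le V(x(t_0))\}$, a set bounded away from the origin, on which $V^\Delta$ along solutions is bounded above by some $-c < 0$ (using that $V^\Delta(x)$ as a function of the state is continuous and strictly negative on this compact annulus, together with the bounded-graininess hypothesis on $\T$ so that $V^\Delta(x(t)) = \frac{V(\sigma\text{-shift}) - V(x(t))}{\mu(t)}$ stays uniformly negative). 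Integrating, $V(x(t)) \le V(x(t_0)) - c\,(t - t_0) \to -\infty$, contradicting $V \ge 0$. Hence $L = 0$, and by positive definiteness of $V$ and continuity this forces $\|x(t)\| \to 0$.

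I expect the main obstacle to be the uniform strict-decrease estimate in the asymptotic case: on $\R$ one simply invokes continuity and compactness of an annulus, but on a general time scale $V^\Delta(x(t))$ depends on $\mu(t)$, so I must use the standing assumption that $\T$ has bounded graininess to get a graininess-independent lower bound $c > 0$ on $-V^\Delta$ over the annulus, and I must be careful that "$V^\Delta(x(t)) < 0$ for $x \ne 0$" really does translate into a bound uniform in $t$ and not merely pointwise. A secondary technical point is making the annulus genuinely compact and invariant in the presence of jumps; again the sublevel-set/superlevel-set description of the trapping region, rather than norm balls, is the clean way to phrase this and I would lean on it throughout. Since the theorem is quoted from \cite{HoTi, LaSiKa}, I would keep the write-up brief and cite those sources for the routine parts.
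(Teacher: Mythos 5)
The paper never proves Theorem~\ref{LyapT}; it is imported from \cite{HoTi,LaSiKa}, so there is no in-paper argument to compare you against. Your architecture --- (i) $V^\Delta\le 0$ implies $t\mapsto V(x(t))$ is nonincreasing, checked separately at right-dense and right-scattered points; (ii) Lyapunov stability via sublevel sets of $V$; (iii) asymptotic stability by letting $V(x(t))\to L\ge 0$ and using a uniform strict-decrease bound to force $L=0$ --- is the standard route and matches the cited sources in outline, and your use of $\int_{t_0}^t 1\,\Delta\tau=t-t_0$ to get $V(x(t))\le V(x(t_0))-c(t-t_0)$ is correct on any time scale.

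There is, however, a genuine gap exactly where you claim the sublevel-set formulation ``sidesteps continuity entirely.'' Forward invariance of $\Omega_\alpha:=\{y:V(y)<\alpha(\varepsilon)\}$ does follow from monotonicity, but the conclusion $\|x(t)\|<\varepsilon$ requires $\Omega_\alpha\subseteq\{y:\|y\|<\varepsilon\}$, and that inclusion is false for a general continuous positive definite $V$: since $\alpha(\varepsilon)=\inf_{\|y\|=\varepsilon}V(y)$ controls $V$ only on the sphere, $\Omega_\alpha$ can contain points of arbitrarily large norm (e.g.\ $V(y)=\|y\|^2/(1+\|y\|^4)$). On $\R$ the first-exit/intermediate-value argument rescues you; at a right-scattered point the trajectory can jump from inside the $\varepsilon$-ball to a point outside it that still lies in $\Omega_\alpha$, and then nothing has been proved. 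Two standard repairs: (a) assume, as \cite{LaSiKa} effectively does, a class-$\mathcal{K}$ lower bound $a(\|y\|)\le V(y)$ on a region the trajectory cannot leave in a single step, so that $V(x(t))<a(\varepsilon)$ really does give $\|x(t)\|<\varepsilon$; or (b) bound the jump directly via $\|x(\sigma(t))\|=\|(I+\mu(t)A(t))x(t)\|\le (1+\mu_{\max}\sup_t\|A(t)\|)\,\|x(t)\|$, using the bounded-graininess hypothesis, and shrink $\delta$ accordingly. For the quadratic candidates $V(x)=x^TPx$ with $P\in\S_n^+$ that this paper actually uses, $\lambda_{\min}(P)\|y\|^2\le V(y)$ holds globally and the gap closes by itself, but the theorem as stated is for general $V$. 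A second, smaller defect, which you flag but do not resolve: the hypothesis ``$V^{\Delta}(x(t))<0$'' is pointwise along a trajectory, and your compactness argument needs a $t$-uniform, state-dependent bound; you should either strengthen the hypothesis to $V^\Delta(x(t))\le -W(x(t))$ with $W$ continuous and positive definite, or explicitly defer that step to \cite{HoTi,LaSiKa}.
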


%%%%%%%%%%%%%%%%%%%%%%%%%%%%%%%%%%%%%%%%%%%%%%%%%%%%%%%%%%%%
\subsection{The Time Scale Algebraic Lyapunov Equation}
%%%%%%%%%%%%%%%%%%%%%%%%%%%%%%%%%%%%%%%%%%%%%%%%%%%%%%%%%%%%
We begin with the quadratic Lyapunov function
candidate $V(x(t))= x^T(t) Px(t)$.  Differentiating with respect
to $t\in\T$ yields
    \begin{align*}
    V^{\Delta}(x(t))&= [x^TPx]^{\Delta}\\
        &= (x^TP)x^{\Delta}+(x^TP)^\Delta x^{\sigma}\\
        &= x^TPA(t)x + x^TA^T(t)P(I+\mu(t)A(t))x\\
        &= x^T[A^T(t)P+(I+\mu(t)A^T(t))PA(t)]x\\
        &= x^T[A^T(t)P+PA(t)+\mu(t)A^T(t)PA(t)]x.
    \end{align*}
If the central quantity satisfies
    $$
    A^T(t)P+PA(t)+\mu(t) A^T(t)PA(t)<0,
    $$
then $V^\Delta(x(t))<0$. Therefore, we seek a solution $P(t)\in\S_n^+$ to the {\it time scale algebraic Lyapunov equation}
    \begin{equation}\label{TSALE}
    A^T(t)P+PA(t)+\mu(t) A^T(t)PA(t)=-M(t),\tag{TSALE}
    \end{equation}
for a given $M(t)\in\S_n^+$.

This algebraic equation unifies the matrix algebraic Lyapunov equations on
$\R$ and $\Z$ discussed earlier: \eqref{TSALE} reduces to \eqref{CALE} on $\T=\R$ and \eqref{DALE} on $\T=\Z$. However, the solutions to \eqref{TSALE} on an arbitrary time scale are fundamentally different than solutions to \eqref{CALE} and \eqref{DALE}---they are generally time varying.

\begin{theorem} [Closed Form Solution of \eqref{TSALE}, \cite{Da2}]\label{TSALEthm}
For each fixed $t\in\T$, define
    \begin{equation*}
    \mathbb{S}_t:=
    \begin{cases}
    \mu(t)\mathbb{N}_0, & \mu(t)\neq 0,\\
    \R_0^+, & \mu(t)=0.
    \end{cases}
    \end{equation*}
The unique solution of \eqref{TSALE} is given by
    \begin{equation}\label{TSALEsol}
    P(t)=\int_{\mathbb{S}_t}\Phi_{A}^T(s,0)M(t)\Phi_{A}(s,0)\Delta s,
    \end{equation}
which converges provided $\lambda\in\H_{\min}$ for all $\lambda\in\spec A$ and all $t\ge T$. Moreover, $P(t)\in\S^+_n$ whenever $M(t)\in\S^+_n$.
\end{theorem}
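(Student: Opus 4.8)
The plan is to verify directly that the proposed $P(t)$ in \eqref{TSALEsol} solves \eqref{TSALE}, then argue uniqueness and positive definiteness separately. First I would fix $t\in\T$ and treat $M(t)$ as a constant matrix; the object $P(t)$ is then a $\Delta$-integral over the auxiliary time scale $\mathbb{S}_t$ in the dummy variable $s$, which is either $\mu(t)\N_0$ (a scaled copy of $\Z$) or $\R_0^+$ (a copy of $\R$) according to whether $t$ is right-scattered or right-dense. The key observation is that on $\mathbb{S}_t$ the function $s\mapsto\Phi_A(s,0)$ solves $X^\Delta=AX$ with the \emph{constant} matrix $A$, so its $\Delta$-derivative in $s$ is $A\Phi_A(s,0)$ and $\Phi_A(\sigma(s),0)=(I+\mu(t)A)\Phi_A(s,0)$. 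Substituting $P(t)$ into the left side $A^TP+PA+\mu A^TPA=A^TP+(I+\mu A^T)PA$ and pulling $A^T$ and $(I+\mu A^T)$ inside the integral, the integrand becomes
$$
A^T\Phi_A^T(s,0)M\Phi_A(s,0)+(I+\mu A^T)\Phi_A^T(s,0)M\Phi_A(s,0)A
= \bigl[\Phi_A^T(s,0)M\Phi_A(s,0)\bigr]^\Delta,
$$
where the last equality is the product rule on $\mathbb{S}_t$ applied to $G(s):=\Phi_A^T(s,0)M\Phi_A(s,0)$, using $G^\Delta=(\Phi_A^T)^\Delta M\Phi_A+(\Phi_A^\sigma)^TM\Phi_A^\Delta = A^T\Phi_A^TM\Phi_A+(I+\mu A^T)\Phi_A^TM\Phi_A A$. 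Hence the left side telescopes to $G(\infty)-G(0)=0-M=-M(t)$, using convergence of the integral (so $G(s)\to 0$) and $G(0)=\Phi_A^T(0,0)M\Phi_A(0,0)=M$. This establishes that $P(t)$ is a solution.

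Next I would address convergence: the hypothesis $\spec A\subset\H_{\min}$ guarantees that the scalar/matrix exponential on \emph{every} relevant graininess decays, so $\|\Phi_A(s,0)\|\le C\rho^{s}$ for some $\rho<1$ (discrete case) or exponential decay (dense case) uniformly for $t\ge T$; this is exactly the Hoffacker–Gard conservative stability region, and it makes the improper $\Delta$-integral over $\mathbb{S}_t$ absolutely convergent, so $G(s)\to 0$ as $s\to\infty$ in $\mathbb{S}_t$ as used above. For uniqueness, I would argue that \eqref{TSALE}, with $t$ fixed, is a linear (Sylvester-type) equation in the matrix unknown $P$: $P\mapsto A^TP+P A+\mu A^TPA = A^TP+(I+\mu A^T)PA$. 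A homogeneous solution $Q$ satisfies $A^TQ+(I+\mu A^T)QA=0$, i.e. $(I+\mu A^T)^{-1}A^TQ+QA=0$; since $A\in\mathcal R$ the matrix $I+\mu A^T$ is invertible, and one checks that the associated Sylvester operator $Q\mapsto BQ+QA$ with $B=(I+\mu A^T)^{-1}A^T$ has no kernel precisely because $\spec A\subset\H_{\min}$ forces $\spec B$ and $-\spec A$ to be disjoint (both lie in the open left-half-type region so $\lambda_i(B)+\lambda_j(A)\ne 0$). Thus $Q=0$ and the solution is unique.

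Finally, positive definiteness: when $M(t)\in\S_n^+$, each integrand $\Phi_A^T(s,0)M(t)\Phi_A(s,0)$ is symmetric positive definite (congruence of a positive definite matrix by the invertible $\Phi_A(s,0)$), so for any $v\ne 0$ the scalar integrand $v^T\Phi_A^T(s,0)M(t)\Phi_A(s,0)v = \|M(t)^{1/2}\Phi_A(s,0)v\|^2$ is nonnegative and strictly positive on a set of positive $\mu$-measure in $\mathbb{S}_t$; hence $v^TP(t)v>0$, giving $P(t)\in\S_n^+$. The main obstacle I anticipate is the product-rule/telescoping step: one must be careful that the $\Delta$-derivative in $s$ is taken on $\mathbb{S}_t$ (with its graininess $\mu(t)$ or $0$), \emph{not} on $\T$, and that the time scale product rule $(FG)^\Delta=F^\Delta G+F^\sigma G^\Delta$ is applied with the shift $F^\sigma=(\Phi_A^T)^\sigma=\Phi_A^T(I+\mu A^T)$ landing on the correct factor — this is exactly what produces the asymmetric $\mu A^TPA$ term and matches the left side of \eqref{TSALE}; everything else is bookkeeping.
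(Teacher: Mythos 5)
The paper does not prove this theorem at all --- it is quoted from DaCunha \cite{Da2} --- so there is no in-paper argument to compare against. Your proposal is the natural (and, as it happens, the standard) one: freeze $t$, work on the auxiliary constant-graininess time scale $\mathbb{S}_t$, recognize the integrand of $A^TP+(I+\mu A^T)PA$ as the $\Delta$-derivative of $G(s)=\Phi_A^T(s,0)M\Phi_A(s,0)$ via the product rule, and telescope to $G(\infty)-G(0)=-M$. That verification is correct, with the caveat (which you implicitly use throughout) that $A$ must commute with $\Phi_A(s,0)$ --- e.g.\ in rewriting $(\Phi_A^T)^\sigma M\Phi_A^\Delta=\Phi_A^T(I+\mu A^T)MA\Phi_A$ as $(I+\mu A^T)\Phi_A^TM\Phi_A A$ --- which holds here precisely because the frozen $A(t)$ is a constant matrix on $\mathbb{S}_t$ and $\Phi_A(s,0)$ is its matrix exponential. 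The convergence and positive-definiteness arguments are fine.

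The one step that does not hold up as written is the justification of uniqueness. You reduce to the Sylvester operator $Q\mapsto BQ+QA$ with $B=(I+\mu A^T)^{-1}A^T$ and assert that $\lambda_i(B)+\lambda_j(A)\neq 0$ ``because both spectra lie in the open left-half-type region.'' That premise is false: for $\lambda\in\H_{\mu}$ the corresponding eigenvalue of $B$ is $\lambda/(1+\mu\lambda)$, which need not have negative real part (take $\mu=1$, $\lambda=-3/2$: then $1+\lambda=-1/2$ so $\lambda\in\H_1$, but $\lambda/(1+\lambda)=3$). The non-resonance condition you need is nevertheless true, and the correct reason is the identity
\begin{equation*}
\frac{\lambda_i}{1+\mu\lambda_i}+\lambda_j=\frac{(1+\mu\lambda_i)(1+\mu\lambda_j)-1}{\mu(1+\mu\lambda_i)},
\end{equation*}
whose numerator cannot vanish because $|1+\mu\lambda_i|<1$ and $|1+\mu\lambda_j|<1$ force $|(1+\mu\lambda_i)(1+\mu\lambda_j)|<1\neq 1$ (and for $\mu=0$ one falls back on $\Re\lambda_i+\Re\lambda_j<0$). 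With that substitution the uniqueness argument, and hence the whole proof, is sound.
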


The upshot here is that even though \eqref{TSALEsol} is a {\it bona fide} solution to \eqref{TSALE}, it does not (in general) lead to constant solutions $P$---and $P$ was assumed to be constant in the Lyapunov candidate at the start of this subsection. Thus, \eqref{TSALE} is not a ``legitimate" Lyapunov equation in the
sense that it is not an appropriate equation to use in a search for
Lyapunov function candidates (even when $A(t)\equiv A$). We are forced to seek a Lyapunov function candidate with a time varying $P$, which we do next.

%However, this For time
%varying $P(t)$, recall that the derivative of $V(x)=x^TP(t)x$ is
%given by
%$$V^\Delta(x)=A^TP(t)+P(t)A+\mu(t) A^TP(t)A+(I+\mu(t)
%        A^T)P^{\Delta}(t)(I+\mu(t) A).$$
%Since there are, generally, no steady state solutions of
%\eqref{TSDLE}, the existence of a solution of \eqref{TSALE} does not
%allow us to conclude $V^\Delta(x)<0$.
%We cannot reduce stability analysis of \eqref{LS} to the solution of
%an algebraic equation alone.
%\eqref{TSDLE}.

%%%%%%%%%%%%%%%%%%%%%%%%%%%%%%%%%%%%%%%%%%%%%%%%%%%%%%%%%%%%
\subsection{The Time Scale Dynamic Lyapunov Equation}
%%%%%%%%%%%%%%%%%%%%%%%%%%%%%%%%%%%%%%%%%%%%%%%%%%%%%%%%%%%%
We begin with the same choice of quadratic Lyapunov function
candidate, $V(x(t))= x^T(t) P(t) x(t)$. Differentiating with respect to $t\in\T$ yields
    \begin{align*}
    V^{\Delta}(x(t))&= [x^T(t) P(t) x(t)]^{\Delta}\\
        &= (x^TP(t))x^\Delta + (x^TP(t))^\Delta x^\sigma\\
        &= x^T P(t)A(t)x + [(x^T)^\Delta P(t)+(x^T)^\sigma P^\Delta(t)]x^\sigma\\
        &= x^T P(t) A(t)x + [x^T A^T P(t) + x^T(I+\mu(t)A(t))^T P^\Delta(t)](I+\mu(t)A(t))x\\
        &= x^T[A^T(t)P(t)+(I+\mu(t)A^T(t))(P^\Delta(t)+P(t)A(t)+\mu(t)P^\Delta(t)A(t))]x\\
        &= x^T[A^T(t)P(t)+P(t)A(t)+\mu(t)A^T(t)P(t)A(t)\\
        &\hskip2in +(I+\mu(t) A^T(t))P^{\Delta}(t)(I+\mu(t)
            A(t))]x.
    \end{align*}
If the central quantity satisfies
    $$
    A^T(t)P(t)+P(t)A(t)+\mu(t) A^T(t)P(t)A(t)+(I+\mu(t) A^T(t))P^{\Delta}(t)(I+\mu(t)A(t))<0,
    $$
then $V^\Delta<0$. Therefore, we seek a solution $P(t)\in\S_n^+$ of the {\it time scale dynamic Lyapunov equation}
    \begin{equation}\label{TSDLE}\tag{TSDLE}
    \begin{aligned}
        A^T(t)P(t)+P(t)A(t)&+\mu(t) A^T(t)P(t)A(t)\\
        &+(I+\mu(t) A^T(t))P^{\Delta}(t)(I+\mu(t) A(t))=-M(t),
    \end{aligned}
    \end{equation}
for a given $M(t)\in\S^+_n$. This equation unifies the matrix differential and difference  Lyapunov equations on $\R$ and $\Z$ discussed earlier: \eqref{TSDLE} reduces to \eqref{CDLE} on $\T=\R$ and to \eqref{DDLE} on $\T=\Z$. Just as importantly, \eqref{TSDLE} also generalizes those types of equations to arbitrary time scales.

\begin{theorem}[Closed Form Solution of \eqref{TSDLE}]\label{TSDLEthm}
The unique solution of
        \begin{equation*}
        \begin{aligned}
        A^T(t)P(t)+P(t)A(t)&+\mu(t) A^T(t)P(t)A(t)\\
        &+(I+\mu(t) A^T(t))P^{\Delta}(t)(I+\mu(t) A(t))=-M(t),\quad P(t_0)=P_0,
        \end{aligned}
        \end{equation*}
is given by
        \begin{equation}\label{TSDLEsol}
            \begin{aligned}
            P(t)&=(\Phi_A^T(t,t_0))^{-1}P(t_0)(\Phi_A(t,t_0))^{-1}\\
                &\hskip.75in-(\Phi_A^T(t,t_0))^{-1}\left[\int^t_{t_0}
                \Phi_A^T(s,t_0)M(s)\Phi_A(s,t_0)\,\Delta s\,\right](\Phi_A(t,t_0))^{-1},
            \end{aligned}
        \end{equation}
where $\Phi_A(t,t_0)$ is the transition matrix for \eqref{DLS}.
\end{theorem}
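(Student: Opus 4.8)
The strategy is to turn \eqref{TSDLE} into a plain delta-integration problem by conjugating the unknown with the transition matrix. Define
\[
Q(t):=\Phi_A^T(t,t_0)\,P(t)\,\Phi_A(t,t_0).
\]
First I would record the facts about $\Phi_A$ that are needed: $\Phi_A^\Delta(t,t_0)=A(t)\Phi_A(t,t_0)$, hence $\Phi_A(\sigma(t),t_0)=(I+\mu(t)A(t))\Phi_A(t,t_0)$; since $A\in\mathcal{R}$ the matrices $I+\mu(t)A(t)$ and $I+\mu(t)A^T(t)$ are invertible, so $\Phi_A(t,t_0)$ is invertible for every $t$; and transposing gives $(\Phi_A^T)^\Delta(t,t_0)=\Phi_A^T(t,t_0)A^T(t)$ together with $(\Phi_A^T)^\sigma(t,t_0)=\Phi_A^T(t,t_0)(I+\mu(t)A^T(t))$.

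Next I would differentiate $Q$ using the time scale product rule for three factors, $(FGH)^\Delta=F^\Delta G^\sigma H^\sigma+F\,G^\Delta H^\sigma+F\,G\,H^\Delta$, with $F=\Phi_A^T(\cdot,t_0)$, $G=P$, and $H=\Phi_A(\cdot,t_0)$. Substituting the formulas above, replacing the shifted factor via $P^\sigma=P+\mu P^\Delta$, and collecting terms, one finds after a short calculation that
\[
Q^\Delta(t)=\Phi_A^T(t,t_0)\Big[A^T(t)P(t)+P(t)A(t)+\mu(t)A^T(t)P(t)A(t)+(I+\mu(t)A^T(t))P^\Delta(t)(I+\mu(t)A(t))\Big]\Phi_A(t,t_0);
\]
that is, the entire left-hand side of \eqref{TSDLE}, conjugated by $\Phi_A$, is exactly $Q^\Delta$.

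Given that identity the rest is immediate. Since $\Phi_A(t_0,t_0)=I$, a matrix function $P$ with $P(t_0)=P_0$ solves \eqref{TSDLE} if and only if $Q$ solves
\[
Q^\Delta(t)=-\Phi_A^T(t,t_0)M(t)\Phi_A(t,t_0),\qquad Q(t_0)=P_0,
\]
whose unique solution, obtained by delta-integration, is
\[
Q(t)=P_0-\int_{t_0}^{t}\Phi_A^T(s,t_0)M(s)\Phi_A(s,t_0)\,\Delta s .
\]
Undoing the conjugation, $P(t)=(\Phi_A^T(t,t_0))^{-1}Q(t)(\Phi_A(t,t_0))^{-1}$, which is precisely \eqref{TSDLEsol}. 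Uniqueness of $P$ follows because $P\mapsto Q$ is a bijection (the conjugating factors are invertible everywhere) and the dynamic equation for $Q$ determines $Q$ uniquely once $Q(t_0)$ is fixed.

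The only real work is the product-rule expansion: one must keep the forward-jump shifts $(\,\cdot\,)^\sigma$ attached to the correct factors and then use $P^\sigma=P+\mu P^\Delta$ to eliminate the shifted copy of $P$, which is precisely the step that produces the characteristic $\mu$-terms $\mu(t)A^T(t)P(t)A(t)$ and $(I+\mu(t)A^T(t))P^\Delta(t)(I+\mu(t)A(t))$ of \eqref{TSDLE}. For full rigour one should also note that $P$ is delta-differentiable with rd-continuous derivative, which is legitimate because the coefficients $I+\mu A$ and $I+\mu A^T$ are invertible, so \eqref{TSDLE} can be solved for $P^\Delta$ and the standard existence–uniqueness theory for dynamic equations applies.
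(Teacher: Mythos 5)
Your proposal is correct and follows essentially the same route as the paper: conjugate the equation by $\Phi_A^T(t,t_0)$ and $\Phi_A(t,t_0)$, recognize the result as $\bigl[\Phi_A^T(t,t_0)P(t)\Phi_A(t,t_0)\bigr]^\Delta$, delta-integrate, and undo the conjugation. The only difference is that you carry out the three-factor product-rule verification explicitly (and correctly), whereas the paper simply asserts that the conjugated left-hand side is a derivative.
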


\begin{proof}
For ease of presentation, we will at times suppress the $t$ dependence in our notation.
Consider
    $$
    A^TP+PA+\mu A^TPA+(I+\mu A^T)P^{\Delta}(I+\mu A)=-M(t).
    $$
Multiply on the left and right by $\Phi_A^T(t,t_0)$ and
$\Phi_A(t,t_0)$, respectively, to obtain
    \begin{align*}
        &\Phi_A^T(t,t_0)\left[A^TP+PA+\mu A^TPA+(I+\mu A^T)P^{\Delta}(I+\mu
                A)\right]\Phi_A(t,t_0)\\
        &\hskip3in=-\Phi_A^T(t,t_0)M(t)\Phi_A(t,t_0).
    \end{align*}
Recognizing the left-hand side of the above equation as a
derivative,
\begin{equation*}
        \left[\Phi_A^T(t,t_0)P(t)\Phi_A(t,t_0)\right]^\Delta
        =-\Phi_A^T(t,t_0)M(t)\Phi_A(t,t_0),
    \end{equation*}
and integrating yields
    \begin{equation*}
        \Phi_A^T(t,t_0)P(t)\Phi_A(t,t_0)-P(t_0)=-\int_{t_0}^t\Phi_A^T(t,t_0)M(t)\Phi_A(t,t_0)\,\Delta
        t.
    \end{equation*}
Rearranging, we obtain
    \begin{align*}
        P(t)&=(\Phi_A^T(t,t_0))^{-1}P(t_0)(\Phi_A(t,t_0))^{-1}\\
            &\hskip.25in-(\Phi_A^T(t,t_0))^{-1}\left[\int^t_{t_0}
                \Phi_A^T(s,t_0)M(s)\Phi_A(s,t_0)\,\Delta
                s\,\right](\Phi_A(t,t_0))^{-1}.
    \end{align*}
\end{proof}

\begin{remark}
We have already seen that \eqref{TSDLE} is a generalized form
unifying the Lyapunov {\it equations} \eqref{CDLE} and \eqref{DDLE} for
systems on $\R$ and $\Z$, respectively, and extending them to arbitrary time domains. Equation \eqref{TSDLEsol} is
also a generalized form unifying the {\it solutions} of \eqref{CDLE} and
\eqref{DDLE} since \eqref{TSDLEsol} becomes \eqref{CDLEsol} on $\R$ and \eqref{DDLEsol} on $\Z$.
\end{remark}

\begin{remark}
At this point, we see how the analysis diverges from that of $\R$
and $\Z$: the solution of \eqref{TSALE} is time varying even when $A(t)\equiv A$ and $M(t)\equiv M$ are constant, since the domain of integration in the solution depends on $\mu(t)$.  Only when operating on time
scales of constant graininess, such as $\R$, $\Z$, and $\T=h\Z$, is
the solution of \eqref{TSALE} constant. On $\R$ and $\Z$,
\eqref{TSALEsol} agrees with the solutions of \eqref{CALE} and
\eqref{DALE} and gives a steady state solution of \eqref{CDLE} and
\eqref{DDLE} as desired. However, on an arbitrary $\T$,
\eqref{TSALEsol} is not a stationary solution of
\eqref{TSDLE} because $P(t)$ is not constant.
\end{remark}

This underscores a crucial difference between algebraic Lyapunov equations on general time scales versus their $\R$ and $\Z$ counterparts: only when the time scale has constant graininess is a solution to an algebraic Lyapunov equation also a (stationary) solution to the dynamic Lyapunov equation.

%%%%%%%%%%%%%%%%%%%%%%%%%%%%%%%%%%%%%%%%%%%%%
\subsection{Further Notes on \eqref{TSDLE}}
%%%%%%%%%%%%%%%%%%%%%%%%%%%%%%%%%%%%%%%%%%%%%

For a given dynamic linear system \eqref{LS} and choice of initial
condition $P_0$, the closed form solution of \eqref{TSDLE} is known. However, in order for $V(x)=x^TP(t)x$ to be a Lyapunov function, we must know the existence of a solution $P(t)$ in the appropriate space of functions, namely $\S_n^+$. It is not clear that $P(t)$ is in fact positive definite from its form in \eqref{TSDLEsol}. This obstacle is overcome by making a special choice of initial condition.

\begin{theorem}\label{TSDLEICthm}
In Theorem~\ref{TSDLEthm}, if the initial condition is
        \begin{equation}\label{TSDLEIC}
        P(t_0)=P_0:=\int^\infty_{t_0}
            \Phi_A^T(s,t_0)M(s)\Phi_A(s,t_0)\,\Delta s,
        \end{equation}
then \eqref{TSDLEsol} becomes
    \begin{equation}\label{TSDLEICsol}
        P(t)=\int^\infty_{t}\Phi_A^T(s,t)M(s)\Phi_A(s,t)\,\Delta s,
    \end{equation}
and $M(t)\in\S^+_n$ implies $P(t)\in\S^+_n$.
\end{theorem}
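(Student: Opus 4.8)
The plan is to substitute the special initial condition \eqref{TSDLEIC} into the known closed-form solution \eqref{TSDLEsol} from Theorem~\ref{TSDLEthm}, simplify using a cocycle identity for the transition matrix, and then read off positive definiteness from the resulting quadratic form. The key algebraic fact I would use is the semigroup property $\Phi_A(s,t_0)=\Phi_A(s,t)\Phi_A(t,t_0)$, valid whenever $A\in\mathcal R$, together with its transpose version $\Phi_A^T(s,t_0)=\Phi_A^T(t,t_0)\Phi_A^T(s,t)$.

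First I would split the integral in \eqref{TSDLEsol}: with $P(t_0)=P_0=\int_{t_0}^\infty \Phi_A^T(s,t_0)M(s)\Phi_A(s,t_0)\,\Delta s$, the bracketed term in \eqref{TSDLEsol} is $\int_{t_0}^t$, so
$$
(\Phi_A^T(t,t_0))^{-1}\Bigl[P_0-\int_{t_0}^t \Phi_A^T(s,t_0)M(s)\Phi_A(s,t_0)\,\Delta s\Bigr](\Phi_A(t,t_0))^{-1}
=(\Phi_A^T(t,t_0))^{-1}\Bigl[\int_t^\infty \Phi_A^T(s,t_0)M(s)\Phi_A(s,t_0)\,\Delta s\Bigr](\Phi_A(t,t_0))^{-1}.
$$
Next I would push the factors $(\Phi_A^T(t,t_0))^{-1}$ and $(\Phi_A(t,t_0))^{-1}$ inside the integral (they are constant in $s$) and apply the cocycle identity to each integrand: $(\Phi_A^T(t,t_0))^{-1}\Phi_A^T(s,t_0)=\Phi_A^T(s,t)$ and similarly $\Phi_A(s,t_0)(\Phi_A(t,t_0))^{-1}=\Phi_A(s,t)$. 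This collapses the expression exactly to \eqref{TSDLEICsol}, namely $P(t)=\int_t^\infty \Phi_A^T(s,t)M(s)\Phi_A(s,t)\,\Delta s$.

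Finally, for the positive-definiteness claim: given $M(s)\in\S_n^+$, for any nonzero vector $v\in\R^n$ we have $v^TP(t)v=\int_t^\infty \bigl(\Phi_A(s,t)v\bigr)^T M(s)\bigl(\Phi_A(s,t)v\bigr)\,\Delta s$. Since $\Phi_A(s,t)$ is invertible (as $A$ is regressive), $\Phi_A(s,t)v\neq 0$ for every $s$, so the integrand is strictly positive at every point; the $\Delta$-integral of a positive, rd-continuous integrand over a nondegenerate domain is positive, hence $v^TP(t)v>0$. Symmetry of $P(t)$ is immediate from the form $\Phi_A^T M \Phi_A$ with $M$ symmetric. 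Thus $P(t)\in\S_n^+$.

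The only genuine subtlety—and the step I would treat most carefully—is the convergence of the improper integral defining $P_0$ (and hence $P(t)$); this is exactly the hypothesis that makes \eqref{TSDLEIC} well-defined, and it should be invoked (or assumed, paralleling the convergence caveat in Theorem~\ref{TSALEthm}, e.g. that $\spec A\subset\H_{\min}$) rather than proved here. Everything else is the routine cocycle manipulation and the elementary positivity argument above.
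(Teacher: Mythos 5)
Your proposal is correct and follows essentially the same route as the paper's proof: substitute the special initial condition into \eqref{TSDLEsol}, combine the two integrals into $\int_t^\infty$, apply the cocycle identity $\Phi_A(s,t_0)=\Phi_A(s,t)\Phi_A(t,t_0)$ to cancel the outer inverse factors, and read off positive definiteness from the resulting quadratic form. Your added remarks on the strict positivity of $v^TP(t)v$ via invertibility of $\Phi_A(s,t)$ and on the convergence of the improper integral are slightly more careful than the paper's treatment, but do not change the argument.
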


\begin{proof}
The solution of \eqref{TSDLE} given in \eqref{TSDLEsol} holds for an
arbitrary choice of $P(t_0)$. Making the specific choice of $P_0$
given in \eqref{TSDLEIC} allows the following simplification:
\begin{align*}
    P(t)&=(\Phi_A^T(t,t_0))^{-1}P(t_0)(\Phi_A(t,t_0))^{-1}\\
         &\hskip.25in-(\Phi_A^T(t,t_0))^{-1}\left[\int^t_{t_0}
            \Phi_A^T(s,t_0)M(s)\Phi_A(s,t_0)\,\Delta
            s\,\right](\Phi_A(t,t_0))^{-1}
\end{align*}
may be written as
\begin{align*}
    P(t)&=(\Phi_A^T(t,t_0))^{-1}\left[\int^\infty_{t_0} \Phi_A^T(s,t_0)M(s)\Phi_A(s,t_0)\,\Delta s\right](\Phi_A(t,t_0))^{-1}\\
         &\hskip.25in-(\Phi_A^T(t,t_0))^{-1}\left[\int^t_{t_0}
            \Phi_A^T(s,t_0)M(s)\Phi_A(s,t_0)\,\Delta
            s\,\right](\Phi_A(t,t_0))^{-1}\\
        &=(\Phi_A^T(t,t_0))^{-1}\left[\int^\infty_{t_0} \Phi_A^T(s,t_0)M(s)\Phi_A(s,t_0)\,\Delta
        s\right.\\
         &\hskip.25in-\left.\int^t_{t_0} \Phi_A^T(s,t_0)M(s)\Phi_A(s,t_0)\,\Delta
            s\,\right] (\Phi_A(t,t_0))^{-1}\\
        &=(\Phi_A^T(t,t_0))^{-1}\left[\int^\infty_{t}\Phi_A^T(s,t_0)M(s)\Phi_A(s,t_0)\,\Delta
            s\,\right](\Phi_A(t,t_0))^{-1}\\
        &=(\Phi_A^T(t,t_0))^{-1}\left[\int^\infty_{t}\Phi_A^T(t,t_0)\Phi_A^T(s,t)M(s)\Phi_A(s,t)\Phi_A(t,t_0)\,\Delta
            s\,\right](\Phi_A(t,t_0))^{-1}\\
        &=\int^\infty_{t}\Phi_A^T(s,t)M(s)\Phi_A(s,t)\,\Delta
            s.
\end{align*}
From the resulting quadratic form of $P(t)$, we see
$M(t)\in\S^+_n$ implies $P(t)\in\S^+_n$.
\end{proof}

\begin{remark}
The choice of initial condition given in Theorem~\ref{TSDLEICthm} is
necessary.  Choosing any other initial condition results in
\eqref{TSDLEsol} being unbounded. For $P(t_0)=P_0+\varepsilon$,
\eqref{TSDLEsol} becomes
\begin{align*}
    P(t)&=(\Phi_A^T(t,t_0))^{-1}(P_0+\varepsilon)(\Phi_A(t,t_0))^{-1}\\
         &\hskip.25in-(\Phi_A^T(t,t_0))^{-1}\left[\int^t_{t_0}
             \Phi_A^T(s,t_0)M(s)\Phi_A(s,t_0)\,\Delta
             s\,\right](\Phi_A(t,t_0))^{-1}\\
        &=(\Phi_A^T(t,t_0))^{-1}\varepsilon(\Phi_A(t,t_0))^{-1}+(\Phi_A^T(t,t_0))^{-1}(P_0)(\Phi_A(t,t_0))^{-1}\\
         &\hskip.25in-(\Phi_A^T(t,t_0))^{-1}\left[\int^t_{t_0}
             \Phi_A^T(s,t_0)M(s)\Phi_A(s,t_0)\,\Delta
             s\,\right](\Phi_A(t,t_0))^{-1}\\
        &=(\Phi_A^T(t,t_0))^{-1}\varepsilon(\Phi_A(t,t_0))^{-1}
         +\int^\infty_{t}\Phi_A^T(s,t)M(s)\Phi_A(s,t)\,\Delta s.
\end{align*}
For stable, constant $A$, $\Phi_A(s,t)$ is a decaying matrix exponential. Thus
while the integral term may converge, the quantity
$\|(\Phi_A^T(t,t_0))^{-1}\varepsilon(\Phi_A(t,t_0))^{-1}\|\rightarrow
\infty$ as $t\rightarrow\infty$.
\end{remark}

\begin{remark}
With this choice of initial condition, $P(t)$ reduces to a useful
form, especially for aligning our results with the literature, at least in the following sense.  If $\T$ has constant graininess, $t_0=0$, and $M$ is constant, then the initial matrix $P_0$ in \eqref{TSDLEIC} is in fact a (constant) solution of \eqref{TSALE} which is in turn a stationary solution of \eqref{TSDLE}. Therefore, precisely this choice of initial matrix (under the assumptions above) produces steady-state solutions of \eqref{TSDLE} from its algebraic counterpart \eqref{TSALE}. This is what happens on $\R$ and $\Z$ but fails on general time scales.
\end{remark}

\begin{remark}
On the other hand, the form of \eqref{TSDLEICsol} allows us to deduce $P(t)\in\S^+_n$ whenever $M(t)\in\S^+_n$, which is essential if we want to apply Theorem~\ref{LyapT}. For such a solution $P(t)$, we know $V(x):=x^T P(t) x>0$ and $V^\Delta(x)<0$. Therefore, the existence of \eqref{TSDLEICsol} is sufficient to establish asymptotic stability of \eqref{LS}.
\end{remark}

%%%%%%%%%%%%%%%%%%%%%%%%%%%%%%%%%%%%%%%%%%%%%%%%%%%%%%%%%%%%%%%%%%%%%%%%%%%%%%


\begin{thebibliography}{99}
%%%%%%%%%%%%%%%%%%%%%%%%%%%%%%%%%%%%%%%%%%%%%%%%%%%%%%%%%%%%%%%%%%%%%%%%%%%%%%

\bibitem{Riccati} % 1
H. Abou-Kandil, G. Freiling, V. Ionesco, and G. Jank, \textit{Matrix
Riccati Equations in Control and Systems Theory}, Birkh\"{a}user
Verlag, Basel, 2003.

%\bibitem{Agarwal} % 2
%R. Agarwal, \textit{Difference Equations and Inequalities}, Marcel
%Dekker, New York, 1992.

\bibitem{AgBoORPe} R. Agarwal, M. Bohner, D. O'Regan, and A. Peterson, Dynamic equations on time scales: a survey, {\it Journal of Computational and Applied Mathematics} {\bf 141} (2002), 1--26.

%\bibitem{Bellman} % 3
%R. Bellman, \textit{Introduction to Matrix Analysis}, McGraw-Hill,
%New York, 1970.
%
%\bibitem{bet2} % 4  %from Billy for TS chapter
%N. Bettaibi, K. Brahim, and A. Fitouhi,  The Mellin transform in
%quantum calculus,  {\em Constr. Approx.} {\bf 23} (2006), 305--323.
%
%\bibitem{bet1} % 5  % from Billy for TS chapter
%N. Bettaibi and A. Fitouhi,  Wavelet transforms in quantum calculus.
%{\em J. Nonlinear Math. Phys.} {\bf 13} (2006), 492--506.

\bibitem{BiCoDe} % 6
S. Bittanti, P. Colaneri, and G. De Nicolao, The difference periodic
Riccati equation for the periodic prediction problem, {\em IEEE
Trans. Automat. Control} {\bf 33} (1988), 706--712.

\bibitem{Bo} D. Bowman, ``$q$-Difference Operators, Orthogonal Polynomials, and Symmetric Expansions" in {\it Mem. Amer. Math. Soc.} {\bf 159} (2002), 1--56.

\bibitem{BoPe2} % 7
M. Bohner and A. Peterson, {\it Advances in Dynamic Equations on
Time Scales}, Birkh\"auser, Boston, 2003.

\bibitem{BoPe1} % 8
M. Bohner and A. Peterson, {\it Dynamic Equations on Time Scales},
Birkh\"auser, Boston, 2001.

%\bibitem{Brogan} % 9
%W.L. Brogan, \textit{Modern Control Theory}, Prentice-Hall, Upper
%Saddle River, 1991.

\bibitem{Ca} R.W. Carroll, {\it Calculus Revisited}, Kluwer, Dordrecht, 2002.

\bibitem{ChKa}P. Cheung and V. Kac,  {\em Quantum Calculus}, Springer-Verlag, New York, 2002.


%\bibitem{Da1} % 10
%J.J. DaCunha, \textit{Lyapunov Stability and Floquet Theory for
%Nonautonomous Linear Dynamic Systems on Time Scales}, Dissertation,
%Baylor University, 2004.

\bibitem{Da2}  % 11
J.J. DaCunha, Stability for time varying linear dynamic systems on
time scales, \textit{J. Comput. Appl. Math.} {\bf 176} (2005),
381--410.

\bibitem{DaGrJaMa} J.M. Davis, I.A. Gravagne, B.J. Jackson, and R.J. Marks II, Controllability, observability, realizability, and stability of dynamic linear systems, {\it Electronic Journal of Differential Equations} {\bf 2009} (2009), 1--32.

\bibitem{DaGrJaMaRa}  % 12
J.M. Davis, I.A. Gravagne, B.J. Jackson, R.J. Marks II, and A.A.
Ramos,  The Laplace transform on time scales revisited, {\em J.
Math. Anal. Appl.} {\bf 332} (2007), 1291--1306.

\bibitem{DeMa} % 13
G.V. Demidenko and I.I. Matveeva, On stability of solutions to
linear systems with periodic coefficients, \textit{Siberian Math.
J.} {\bf 42} (2001), 282--296.

\bibitem{GaHo} % 14
T. Gard and J. Hoffacker, Asymptotic behavior of natural growth on
time scales, \textit{Dynam. Systems Appl.} {\bf 12} (2003),
131--147.

\bibitem{GrDaDC} % 15
I.A. Gravagne, J.M. Davis, and J.J. DaCunha, A unified approach to
high-gain adaptive controllers, submitted.

\bibitem{GrDaDaMa} % 16
I.A. Gravagne, J.M. Davis, J.J. DaCunha, and R.J. Marks II,
Bandwidth reduction for controller area networks using adaptive
sampling, \textit{Proc. Int. Conf. Robotics and Automation, New
Orleans, LA} (2004), 5250--5255.

\bibitem{Gu} % 17
G. Guseinov,  Integration on time scales.  {\em J.
Math. Anal. Appl.} {\bf 285} (2003), 107--127.

\bibitem{Hi1} S. Hilger,  Analysis on measure chains---a unified
approach to continuous and discrete calculus,  {\em Results
Math.} {\bf 18} (1990), 18--56.

\bibitem{Hi2} S. Hilger, {\em Ein Ma\ss kettenkalk\"ul mit
Anwendung auf Zentrumsmannigfaltigkeiten},  Ph.D. thesis,
Universit\"at W\"urzburg, 1988.

%\bibitem{Horn} % 19
%R. Horn and C. Johnson, {\em Matrix Analysis}, Cambridge University
%Press, 1985.
%
%\bibitem{Horn2} % 20
%R. Horn and C. Johnson, {\em Topics in Matrix Analysis}, Cambridge
%University Press, 1994.

\bibitem{HoJa1} J. Hoffacker and B.J. Jackson, Stability results for higher dimensional equations on time scales, preprint.

\bibitem{HoJa2} J. Hoffacker and B.J. Jackson, A time scale model of interacting transgenic and wild mosquito populations, preprint.

\bibitem{HoTi}
J. Hoffacker and C.C. Tisdell, Stability and instability for dynamic equations on time scales, {\it Comput. Math. Appl.} {\bf 49} (2005), 1327--1334.

%\bibitem{Ja2} % 21
%B.J. Jackson, Adaptive control in the nabla setting. {\em Neural
%Parallel Sci. Comput.} {\bf 16} (2008), 253--272.

%\bibitem{Ja} % 22
%B.J. Jackson, {\em A General Linear Systems Theory on Time Scales:
%Transforms, Stability, and Control}, Dissertation, Baylor
%University, 2007.

%\bibitem{KaBe}% 24 %Okay cite format
%R.E. Kalman and J.E. Bertram, Control system analysis and design via
%the second method of Lyapunov I: Continuous-time systems,
%\textit{Trans. ASME Ser. D. J. Basic Engrg.} {\bf 82} (1960),
%371--393.
%
%\bibitem{KaBe2} %25
%R.E. Kalman and J.E. Bertram, Control system analysis and design via
%the second method of Lyapunov II: Discrete-time systems,
%\textit{Trans. ASME Ser. D. J. Basic Engrg.} {\bf 82} (1960),
%394--400.
%
%\bibitem{KePe} %26
%W. Kelly and A. Peterson, \textit{Difference Equations: An
%Introduction with Applications}, Harcourt/Academic Press,
%Burlington, 2001.

\bibitem{LaSiKa} %27
V. Lakshmikantham, S. Sivasundaram, and B. Kaymakcalan,
\textit{Dynamic Systems on Measure Chains}, Kluwer Academic
Publishers, The Netherlands, 1996.

%\bibitem{Li} %28
%D. Liberzon, \textit{Switching in Systems and Control},
%Birkh\"{a}user, Boston, 2003.
%
%\bibitem{LiHeMo} %29
%D. Liberzon, J.P. Hespanha, and S.A. Morse, Stability of switched
%systems: a Lie-algebraic condition, \textit{Systems Control Lett.}
%{\bf 37} (1999), 117--122.
%
%\bibitem{LiMo} %30
%D. Liberzon and A.S. Morse, Basic problems in stability and design
%of switched systems, \textit{IEEE Control Syst. Mag.} {\bf 5}
%(1999), 59--70.

\bibitem{Ly} %31
A.M. Lyapunov, The general problem of the stability of motion, {\em
Internat. J. Control} {\bf 55} (1992), 521--790.

\bibitem{MaGrDa} %32
R.J. Marks II, I.A. Gravagne, and J.M. Davis, A generalized Fourier
transform and convolution on time scales. {\em J. Math. Anal. Appl.}
{\bf 340} (2008), 901--919.

\bibitem{MiHoLi} A.N. Michel, L. Hou, and D. Liu,  {\em Stability of Dynamical Systems: Continuous, Discontinuous, and Discrete Systems},  Birkh\"auser, Boston, 2008.

\bibitem{NaBa} %33
K.S. Narendra and J. Balakrishnan, A common Lyapunov function for
stable LTI systems with commuting $A$-matrices, \textit{IEEE Trans.
Automat. Control} {\bf 39} (1994), 2469--2471.

\bibitem{PoSiWi} %34
C. P\"{o}tzsche, S. Siegmund, and F. Wirth, A spectral
characterization of exponential stability for linear time-invariant
systems on time scales, \textit{Discrete Contin. Dyn. Syst.} {\bf 9}
(2003), 1223--1241.

%\bibitem{Ra}
%A.A. Ramos, {\em Stability of Hybrid Dynamic Systems: Analysis and Design}, Dissertation, Baylor
%University, 2009.

\bibitem{Rugh} %35
W.J. Rugh, \textit{Linear System Theory}, Prentice-Hall, Englewood
Cliffs, 1996.

\bibitem{SeSaWu} J. Seiffertt, S. Sanyal, and D.C. Wunsch, Hamilton-Jacobi-Bellman equations and approximate dynamic programming on time scales, {\it IEEE Trans. Systems, Man, and Cybernetics B} {\bf 38} (2008), 918--923.

%\bibitem{TiBu} %36
%K. Tindell and A. Burns, Guaranteeing message latencies on control
%area network (CAN), {\em Proc. 1st International CAN Conference},
%(1994).

\bibitem{TiMa} %37
M.K. Tippett and D. Marchesin, Upper bounds for the solutions of the
discrete algebraic Lyapunov equation, \textit{Automatica J. IFAC}
{\bf 35} (1999), 1485--1489.

\bibitem{Va} %38
A. Varga, Periodic Lyapunov equations: some applications and new
algorithms, \textit{Int. J. Control} {\bf 67} (1997), 69--87.

\bibitem{Va2} %39
A. Varga, Solution of positive periodic discrete Lyapunov equations
with applications to the balancing of periodic systems,
\textit{Proc. of European Control Conference} (1997).



%\bibitem{WaRa} %40
%        M. Wargui and A. Rachid, Application of controller area
%        network to mobile robots, {\em Proc. Mediterranean
%        Electromechanical Conf.} {\bf 1} (1996), 205--207.
%
%\bibitem{WaTaRa} %41
%        M. Wargui, M. Tadjine, and A. Rachid, A scheduling approach
%        for decentralized mobile robot control system, {\em IEEE Int'l
%        Conf. Intelligent Robots and Systems} {\bf 2} (1997), 1138--1143.
%
%\bibitem{WiWi} %42
%        W. Wiesspeiner and E. Windischbacher,  Distributed
%        intelligence to control a stair-climbing wheelchair, {\em Proc.
%        IEEE Int. Conf. Engineering in Medicine and Biology}
%        {\bf 17} (1995), 1173--1174.
%
%\bibitem{ZuSh} %43
%        K. Zuberi and K. Shin, Non-preemptive scheduling of
%        messages on controller area network for real-time control
%        applications, {\em Proc. Real-Time Technology and Applications}
%        (1995), 240--249.
%
%\bibitem{ZuSh2} %44
%        K. Zuberi and K. Shin, Scheduling messages on
%        controller area network for real-time CIM applications,
%        {\em IEEE Trans. Robotics and Automation} {\bf 13} (1997),
%        310--314.

\end{thebibliography}
\end{document}